\DeclareMathOperator*{\argmin}{arg\,min}
\DeclareMathOperator{\Span}{span}
\numberwithin{equation}{section}
\newcommand{\norm}[1]{\left\lVert#1\right\rVert}
\newtheorem{theorem}{Theorem}[section]
\newtheorem{lemma}[theorem]{Lemma}
\newtheorem{definition}[theorem]{Definition}
\newtheorem{proposition}[theorem]{Proposition}
\providecommand{\keywords}[1]{\textbf{\textit{Key words ---}} #1}
\providecommand{\MSC}[1]{\textbf{\textit{MSC ---}} #1}
\DeclareMathAlphabet{\mathcall}{OMS}{cmsy}{m}{n}
\newcommand{\R}{\mathbb{R}}
\newcommand{\N}{\mathbb{N}}
\newcommand{\inner}[2]{\left\langle #1,\,#2\right\rangle}
\newcommand{\Hk}{\mathcall{H}_k\!\left(\Omega\right)}
\newcommand{\HkR}{\mathcall{H}_k\!\left(\mathbb{R}^N\right)}
\newcommand{\normW}[2]{ \left\|#1\right\|_{#2}}
\begin{document}
\selectlanguage{english}
  \title{Escaping the native space of Sobolev  kernels by interpolation}
  \author[1,2]{Tobias Ehring}
\author[1,3]{Max-Paul Vogel}
\author[1,3]{Bernard Haasdonk}
\affil[1]{{Institute of Applied Analysis and Numerical Simulation}, {University of Stuttgart}, {{Pfaffenwaldring 57}, {Stuttgart}, {70569}, {Baden-W\"urttemberg}, {Germany}}}
\affil[2]{Corresponding author. E-mail: \href{mailto:ehringts@mathematik.uni-stuttgart.de}{ehringts@mathematik.uni-stuttgart.de}}
\affil[3]{Contributing authors. E-mails: \href{max-paul.vogel@mathematik.uni-stuttgart.de}{max-paul.vogel@mathematik.uni-stuttgart.de},  \href{mailto:haasdonk@mathematik.uni-stuttgart.de}{haasdonk@mathematik.uni-stuttgart.de}}
  \date{\today}
  \maketitle
\begin{abstract}
\noindent Classical convergence analysis for kernel interpolation typically assumes that the target function $f$ lies in the reproducing kernel Hilbert space   $\Hk$ induced by a kernel on a domain $\Omega\subset\mathbb{R}^N$. For many applications, however, this assumption is overly restrictive.
We develop a general framework for analyzing the convergence of kernel interpolation {beyond the native space}. Let $A(\Omega)$ and $B(\Omega)$ be Banach spaces with continuous embeddings $\Hk \hookrightarrow A(\Omega)\hookrightarrow B(\Omega)$, assume point evaluation is continuous on $A(\Omega)$, and that $\Hk$ is dense in $A(\Omega)$. For a nested sequence of node sets $(X_n)_{n\ge1}\subset\Omega$ with $\bigcup_n X_n$ dense, we characterize convergence of the kernel interpolants in the $B(\Omega)$-norm for all target functions in $A(\Omega)$ via the uniform boundedness of the interpolation operators
$
\Pi^{\,n}_{A,B}:A(\Omega)\to B(\Omega)$.
This yields a necessary and sufficient condition under which kernel interpolation extends beyond $\Hk$.
Specializing to Sobolev kernels of order 
$\tau>N/2$ on bounded Lipschitz domains, we show that every 
$f \in C(\overline{\Omega})$ can be approximated in the
$L^2(\Omega)$-norm by interpolation using quasi-uniform nested centers.
Moreover, for a subclass of Sobolev kernels (including integer-order Matérn kernels), we prove that the Lebesgue constant is uniformly bounded on $[a,b]\subset\mathbb{R}$ under quasi-uniform centers; within our framework this implies supremum norm convergence of the interpolants for every target functions $f \in C([a,b])$. 
\end{abstract}

\noindent \keywords{kernel interpolation, scattered-data approximation, convergence beyond the native space, bounding the Lebesgue constant, reproducing kernel Hilbert spaces}\\

\noindent \MSC{41A05, 41A30,  46E22, 41A63 }

   \section{Introduction}\label{sec1}
   Kernel methods provide meshfree approximation schemes for multivariate functions from scattered data.
 Among these, kernel interpolation is straightforward to implement and, under a suitable sampling strategy, admits rigorous convergence guarantees; see \cite{TSH21}.  Classical error analysis is typically
developed under the assumption that the target function \(f\) belongs to the reproducing kernel Hilbert space (RKHS) \(\Hk\) (the native space) induced by a positive definite kernel \(k\) on \(\Omega \subset \R^N\). For \(f\in\Hk\), convergence in the \(\Hk\)-norm follows as the set of centers becomes dense in \(\Omega\); see Theorem~\ref{theo:convGeneral}. Moreover, convergence rates in weaker norms are available.
In the widely used case of Sobolev kernels of (possibly fractional) order \(\tau\) on a bounded Lipschitz domain \(\Omega \subset \R^N\), the space \(\Hk\) is norm-equivalent to the Sobolev space \(W_2^\tau(\Omega)\). If \(\tau > m + \tfrac{N}{2}\) for some \(m \in \N_0\), \(f \in \Hk\), and \(X_n \subset \Omega\) is a finite set of pairwise distinct centers with fill distance \(h_{X_n,\Omega}\) (see Section~\ref{sec:ScatterZero}), then for \(h_{X_n,\Omega}\) sufficiently small, the kernel interpolant \(s_f^n\) satisfies
\[
  \lvert f - s_f^n \rvert_{W_2^m(\Omega)}
  \;\le\;
  C_{\text{\tiny SZ}}'\, h_{X_n,\Omega}^{\,\tau - m}\, \|f\|_{\Hk}
  \quad\text{and}\quad
  \|f - s_f^n\|_{L^\infty(\Omega)}
  \;\le\;
  C_{\text{\tiny SZ}}'\, h_{X_n,\Omega}^{\,\tau - N/2}\, \|f\|_{\Hk},
\]
where \(C_{\text{\tiny SZ}}' > 0\) is independent of \(f\) and \(n\); see \cite[Corollary~11.33]{W04}. Here, $\lvert \, \cdot \, \rvert_{W_2^m(\Omega)}$ denotes the Sobolev space seminorm. 
From a practical standpoint, the assumption that the target function belongs to the RKHS \(\Hk\) is difficult to verify and, as numerical evidence indicates, often not essential for achieving accurate approximations. This motivates the central question: under what conditions on the target function, the error norm, the kernel, and the sampling strategy does kernel interpolation yield reliable approximations for functions outside the native space; in other words, when can one ``escape'' the native space?

\medskip
\noindent Existing literature has already addressed this question in the setting of Sobolev kernels where the native space of the kernel corresponds to \(W_2^{\tau}(\Omega)\) but the target function possesses only lower Sobolev regularity.
\noindent Let \(\Omega \subset \R^N\) be a bounded Lipschitz domain. Throughout the paper, this is a standing assumption. For Sobolev kernels of order \(\tau>0\) with \(\Hk \simeq W_2^{\tau}(\Omega)\), let \(X_n\subset\Omega\) be sets of pairwise distinct centers with fill distance \(h_{X_n,\Omega}\) and uniformly bounded mesh ratio (see Section~\ref{sec:ScatterZero}). Fix \(\beta\) with
$
  N/2 < \lfloor \beta \rfloor \le \beta \le \tau,
$
and \(0 \le \mu \le \beta\). Denote by \(s_f^{\,n}\) the kernel interpolant of \(f\) at \(X_n\). Then the following reduced-regularity error estimates hold:

\begin{itemize}
  \item[\textbf{(i)}] \cite[Thm.~4.2]{narcowich2006}, \cite[Thm.~4.2]{karvonen2020}.
  If \(f \in W_2^{\beta}(\Omega)\), then
  \begin{align*} 
    \| f - s_f^{\,n} \|_{W_2^{\mu}(\Omega)}
    \;\le\; C\, h_{X_n,\Omega}^{\,\beta - \mu}\, \| f \|_{W_2^{\beta}(\Omega)}, \qquad
    \| f - s_f^{\,n} \|_{L^{\infty}(\Omega)}
    \;\le\; C\, h_{X_n,\Omega}^{\,\beta - N/2}\, \| f \|_{W_2^{\beta}(\Omega)}.
  \end{align*}

  \item[\textbf{(ii)}] \cite[Thm.~3.10]{Narcowich2004}.
  If \(f \in C^{\beta}(\overline{\Omega})\) with \(\beta,\mu \in \N_0\), then
  \[
    | f - s_f^{\,n} |_{W_2^{\mu}(\Omega)}
    \;\le\;
    C'\, h_{X_n,\Omega}^{\,\beta-\mu}\, \|f\|_{C^{\beta}(\overline{\Omega})},
    \qquad
    \| f - s_f^{\,n} \|_{L^{\infty}(\Omega)}
    \;\le\;
    C'\, h_{X_n,\Omega}^{\,\beta-N/2}\, \|f\|_{C^{\beta}(\overline{\Omega})}.
  \]

  \item[\textbf{(iii)}] \cite[Thm.~4.2]{narcowich2004scattered}.
  For positive-definite kernels with polynomial spectral asymptotics (a class closely related to Sobolev kernels),   if \(f \in C_0^{\beta}(\R^N)\cap W_2^{\beta}(\R^N)\) with integer \(\beta\), and \(\alpha \in \N_0^N\) satisfying \(|\alpha| + N/2 < \beta\), then
  \[
    \| \partial^{\alpha} f - \partial^{\alpha} s_f^{\,n} \|_{L^{\infty}(\Omega)}
    \;\le\;
    C''\, h_{X_n,\Omega}^{\,\beta-|\alpha|-N/2}\, \max  \left\{ \|f\|_{C_0^{\beta}(\R^N) },\|f\|_{  W_2^{\beta}(\R^N)} \right\}.
  \]
\end{itemize}

\noindent Here \(C, C', C''>0\) are constants independent of \(f\) and \(n\) (but possibly depending on \(\Omega\), \(N\), \(\beta\), \(\tau\), the kernel, and the mesh ratio). These results above are typically proven via the standard band-limited approximation approach coupled with stability/inverse estimates; see the survey \cite{narcowich2005}. We also note that analogous estimates hold in the complementary case -- outside the scope of the present work -- where the target function is {smoother} than the RKHS, leading to various forms of superconvergence; see \cite{schaback1999improved,schaback2018superconvergence,karvonen2025generalsuperconvergencekernelbasedapproximation}.

\medskip
\noindent The estimates in \textbf{(i)} also extend to  manifolds: for Sobolev kernels restricted to a smooth, compact, embedded submanifold \(\mathcal{M}\subset\R^N\) without boundary, the same rates hold; see \cite[Thm.~17]{fuselier2012}. On the unit sphere \(\mathbb{S}^N\), analogous bounds are available for spherical basis function (SBF) interpolation with Sobolev kernels of order \(\tau\). If \(f\in W_2^{\beta}(\mathbb{S}^N)\), one obtains \(W_2^{\mu}(\mathbb{S}^N)\)-error estimates of the same order as in \textbf{(i)}; see \cite[Thm.~5.5]{narcowich2007direct} and \cite{Mhaskar2009}. Moreover, if \(f\in C^{2\beta}(\mathbb{S}^N)\) and \(\tau\ge 2\beta \ge N/2\), the SBF interpolant satisfies \(L^{\infty}(\mathbb{S}^N)\)-norm estimates as in \textbf{(ii)}; see \cite[Thm.~3.2]{Narcowich2002}.

\medskip

\noindent Related results hold for surface splines of order \(\tau\), which are conditionally positive definite kernels whose native space is the Beppo--Levi space \(BL^{\tau}(\Omega)\) (i.e., a Sobolev-type space modulo polynomials of degree \(\le \tau-1\)). For data sets \(X_n \subset \Omega\) that are \(\Pi_{\tau-1}\)-unisolvent, and target functions \(f \in BL^{\beta}(\Omega)\) with \(\tau \ge \beta > N/2\), one obtains \(L^{p}(\Omega)\)-error estimates for the corresponding interpolant for $1\leq p \leq \infty $; see \cite[Theorem~3.5]{brownlee2007}.

\medskip
\noindent A different interpolation strategy is analyzed in \cite{yoon2001spectral}, where the radial basis function shape parameter \(\gamma:=\gamma(h_{X_n,\Omega})\) is coupled to the vanishing fill distance \(h_{X_n,\Omega}\). For quasi-uniform data and target functions \(f\in W_\infty^{\beta}(\Omega)\) with integer \(\beta\ge 1\), the associated interpolant \(s_f^n\) satisfies
\[
\|f-s_f^n\|_{L^\infty(\Omega)}=O\!\left(h_{X_n,\Omega}^{\,\beta}\right)\quad\text{as }h_{X_n,\Omega}\to 0.
\]
This holds for standard choices including the multiquadric kernel, the inverse multiquadric kernel, and the shifted surface splines. This applies to standard kernels including the multiquadric kernel, the inverse multiquadric kernel, and shifted surface splines. A related rescaled, localized radial basis function scheme is  investigated in \cite{DeMarchi2020}. There, for Sobolev kernels induced by compactly supported radial basis functions, the shape parameter is chosen proportional to the fill distance $h_{X_n,\Omega}$.  For a bounded Lipschitz domain $\Omega \subset \R^N$ and quasi-uniform interpolation nodes, this leads to the following convergence estimate: there exists a constant $C''' > 0$ such that
\[
  \|f - s_f^{\,n}\|_{L^{\infty}(\Omega)}
  \;\le\;
  C''' \, h_{X_n,\Omega}\, \|f\|_{C^{1}(\Omega)}
  \quad \text{for all } f \in C^{1}(\Omega).
\]
\noindent  For kernel-based quadrature, convergence extends also beyond the native-space setting: even when the integrand does not belong to \(\Hk\), one can derive rates under Sobolev smoothness assumptions on the integrand (e.g., \(f\in W_2^{\beta}(\Omega)\)), with the rates adapting to \(\beta\); see \cite{kanagawa2016,kanagawa2018}.

\medskip
\noindent With the exception of the adapted shape-parameter scheme in \cite{yoon2001spectral}, the preceding results assume target smoothness strictly above the critical index \(N/2\) (e.g., \(f\in W_2^{\beta}(\Omega)\) or \(f\in C^{\beta}(\overline{\Omega})\) with \(\beta>N/2\)). From an applied perspective, the regime of merely continuous target functions \(f\in C(\overline{\Omega})\) is especially relevant and will be a recurring focus. In this setting, the question of “escaping” the native space is particularly compelling for {universal} kernels, i.e., continuous kernels whose RKHS is dense in \(C(\overline{\Omega})\) in the uniform norm; see \cite{steinwart2002}. A canonical example is given by Sobolev kernels of order \(\tau\) on bounded Lipschitz domains with \(\lfloor\tau\rfloor>N/2\). While universality guarantees the {existence} of RKHS approximants for every \(f\in C(\overline{\Omega})\), it does not, by itself, provide a constructive, data-driven scheme (such as interpolation) with convergence guarantees. This motivates our study of native-space escape for merely continuous target functions on $\overline{\Omega}$. However, in this case, no general convergence {rates} should be expected; indeed, one can conversely infer target smoothness from observed rates for Sobolev kernels, see \cite{Wenzel2025}.

\medskip
\noindent \textbf{Main contributions.}
  We develop a general framework for analyzing {native-space escape by interpolation}. Given function spaces \(A(\Omega)\) (for the target functions) and \(B(\Omega)\) (for the error metric),  we consider the family of linear interpolation operators
$ 
\Pi^{\,n}_{A,B}:A(\Omega)\to B(\Omega),
$ which will be specified explicitly in Section~\ref{sec3}.
Uniform boundedness of \(\{\|\Pi^{\,n}_{A,B}\|_{\mathcal{L}(A(\Omega),B(\Omega))}\) is shown to be a decisive criterion ensuring that interpolation provides reliable approximation in the \(B(\Omega)\)-norm for target functions in \(A(\Omega)\), even when these target functions lie outside the native RKHS. This recovers and streamlines a result from \cite[Corollary~4.3]{narcowich2006}: for quasi-uniform center sets \(X_n\), the interpolation operator $$\Pi^{\,n}_{W_2^{\beta}(\Omega),W_2^{\beta}(\Omega)}:W_2^{\beta}(\Omega)\to W_2^{\beta}(\Omega)$$ is uniformly bounded. Within our framework, this immediately yields native-space escape from \(\Hk\simeq W_2^{\tau}(\Omega)\) to \(A(\Omega)=B(\Omega)=W_2^{\beta}(\Omega)\) for \(\beta\le\tau\).

\medskip
\noindent We further apply the framework to Sobolev kernels with continuous target functions on $\overline{\Omega}$, with the approximation error measured in the \(L^2(\Omega)\)-norm. Under natural uniform-sampling assumptions, we prove uniform boundedness of the interpolation operators, yielding native-space escape in the  \(L^2(\Omega)\)-norm; to the best of our knowledge, this has not been shown yet.

\medskip
\noindent Finally, for Sobolev kernels  and the continuous target functions  on $\overline{\Omega}$ in the supremum norm setting, one has \(\|\Pi^{\,n}_{C,C}\|=\Lambda_{X_n}\), the Lebesgue constant. On compact, boundaryless manifolds, we have \(\sup_n\Lambda_{X_n}<\infty\) for quasi-uniform \(X_n\) and integer orders \(\tau>N/2\); see \cite[Thm.~4.6]{HNW10}. This does not directly extend to Euclidean domains with boundary: on bounded \(\Omega\subset\R^N\), \cite{marchi2010} shows \(\Lambda_{X_n}\lesssim \sqrt{|X_n|}\) for finitely smooth radial basis functions, while the numerical experiments in that work even suggest uniform boundedness. As a new positive result in the presence of a boundary, we prove that on intervals \([a,b]\subset\R\) and for Sobolev kernels of suitable order, \(\{\Lambda_{X_n}\}_n\) is uniformly bounded for quasi-uniform nodes; consequently, interpolation achieves native-space escape in the supremum norm for all \(f\in C([a,b])\).

\medskip
\noindent The remainder of the paper is organized as follows. Section~\ref{sec2} gives a background on kernel interpolation with emphasis on Sobolev kernels. Section~\ref{sec3} develops the general framework for native-space escape by interpolation. Section~\ref{sec4} applies this framework to Sobolev kernels with continuous target functions on $\overline{\Omega}$ and approximation error measured in the \(L^2\)-norm. In Section~\ref{sec5} the \(L^2\)-norm error is replaced by the stronger supremum-norm error. Section~\ref{sec6} presents conclusions and directions for future work.

 \section{Background on kernel interpolation}\label{sec2}

We briefly recall basic notions on positive definite kernels, RKHSs, and kernel interpolation.
Let $\Omega\subset\R^N$ be nonempty. A {kernel} is any symmetric function
$
  k:\Omega\times\Omega\to\R. 
$
For a finite set of pairwise distinct points $X_n = \{x_1,\dots,x_n\}\subset\Omega$, the associated {Gramian  matrix} $K_{X_n} \in \R^{n \times n}$ is defined by
\[
  (K_{X_n})_{ij} \;=\; k(x_i,x_j),\qquad i,j=1,\dots,n.
\]
The kernel $k$ is called {positive definite} (p.d.) if
$
  z^\top K_{X_n} z \,\geq\, 0
$
for every finite set of pairwise distinct points $X_n \subset \Omega$ and all $z \in \mathbb{R}^n$, and it is called {strictly positive definite} (s.p.d.) if
$
  z^\top K_{X_n} z \,>\, 0
$
for every finite set of pairwise distinct points $X_n \subset \Omega$ and all nonzero $z \in \mathbb{R}^n$.
 \\
 Every p.d.\! kernel $k$ induces a unique RKHS $\Hk$ of real-valued functions on $\Omega$ with reproducing kernel $k$ (Moore--Aronszajn theorem, e.g.\ \cite[Thm.~10.10]{W04}). An RKHS is a Hilbert space of functions \(f:\Omega\to\mathbb{R}\) such that the point‐evaluation functionals
\[
  \delta_x:\Hk\to\mathbb{R},\quad \delta_x(f):=f(x),
\]
are continuous for all \(x\in\Omega\).  Equivalently (see proof of Theorem 10.2 in \cite{W04}), there exists a kernel \(k\) (reproducing kernel) satisfying
$
  k(x,\cdot)\in\Hk$
and
\begin{align}\label{eq:reproducing}
  \langle f,\,k(x,\cdot)\rangle_{\Hk}=f(x)
  \quad\forall\,f\in\Hk,\ \forall\,x\in\Omega ,
\end{align}
known as the {reproducing property}. If $k$ is continuous on $\Omega\times\Omega$, then the map $x\mapsto k(x,\cdot)\in\Hk$ is continuous, and every $f\in\Hk$ is continuous on $\Omega$. This is a consequence of the reproducing property, since, for all $x,y \in \Omega$, 
\begin{align*}
    |f(x)-f(y)| = &  |\langle f,\,k(x,\cdot)-k(y,\cdot)\rangle_{\Hk}| \\
    \leq& \| f\|_{\Hk} \| k(x,\cdot)-k(y,\cdot) \|_{\Hk}  
    = \| f\|_{\Hk} \sqrt{\left(k(x,x)-2k(x,y)+k(y,y)\right)}.
\end{align*}
\noindent For a given p.d.\! kernel $k:\Omega\times\Omega\to\R$,  fixed pairwise distinct {nodes} $X_n=\{x_1,\dots,x_n\}\subset\Omega$ (also called centers in this context), and a target function $f:\Omega\to\R$, the  minimal-norm interpolation problem in $\Hk$ is defined as
\begin{equation}\label{eq:rkhs-interp}
  s_f^n \;=\; \argmin_{s\in\Hk}\Bigl\{ \norm{s}_{\Hk} \;\big|\; s(x_i)=f(x_i)\ \text{for }i=1,\dots,n \Bigr\}.
\end{equation}
Although \eqref{eq:rkhs-interp} is posed in an infinite-dimensional space,  its solution lies in the finite-dimensional subspace
\[
  V(X_n)\;:=\; \Span\bigl\{\,k(x_i,\cdot):\,i=1,\dots,n\,\bigr\}\subset\Hk
\]
and admits the representation
\begin{equation}\label{eq:representer}
  s_f^n(x) \;=\; \sum_{i=1}^n \alpha_i\,k(x_i,x),\qquad \alpha=(\alpha_1,\dots,\alpha_n)^\top\in\R^n,
\end{equation}
where the coefficient vector $\alpha$ solves the linear system
\begin{equation}\label{eq:gram-system}
  K_{X_n}\,\alpha \;=\; r,\qquad r:=\bigl(f(x_1),\dots,f(x_n)\bigr)^\top.
\end{equation}
Indeed, if $k$ is s.p.d., then $K_{X_n}$ is invertible, hence \eqref{eq:rkhs-interp} has a unique solution for {any} data vector $r\in\R^n$ (in particular, $f$ is not required to be in $\Hk$). Furthermore, the RKHS norm of the interpolant can be expressed as
\begin{equation}\label{eq:minimalNorm}
  \norm{s_f^n}_{\Hk}^2 \;=\; r^\top K_{X_n}^{-1} r \;=\; \alpha^\top K_{X_n}\alpha.
\end{equation}
An alternative formulation of the interpolant avoids repeated inversion of the Gramian  matrix when fitting multiple target functions.  One introduces the so‐called Lagrange functions, which form a cardinal basis of $V(X_n)$, i.e.
there exists a unique family $\{l_{x_i}\}_{i=1}^n\subset V(X_n)$ with the cardinal  property
\begin{align}\label{eq:LagrangBasis}
    l_{x_i}(x_j)=\delta_{ij}\qquad (i,j=1,\dots,n).
\end{align}
With this basis the interpolant takes the {Lagrange form}
\begin{equation}\label{eq:lagrange-rep}
  s_f^n(x) \;=\; \sum_{i=1}^n f(x_i)\,l_{x_i}(x),\qquad x\in\Omega.
\end{equation}
Using this we define the interpolation operator
\begin{equation}\label{eq:Pi_Hk}
  \Pi^n:\Hk\to\Hk,\qquad
  \Pi^n f=\sum_{i=1}^{n} f(x_i)\,l_{x_i}(\cdot).
\end{equation}
Note that $\Pi^n$ is the $\Hk$-orthogonal projection onto the closed subspace $V(X_n)\subset \Hk$, since for any $f\in\Hk$ and any $v=\sum_{i} a_i\, k(x_i,\cdot)\in V(X_n)$,
\begin{align}\label{eq:acrossTErmZEros}
  \langle f-\Pi^n f,\,v\rangle_{\Hk}
  \;=\; \sum_i a_i\bigl(f(x_i)-\left(\Pi^n f\right)(x_i)\bigr)\;=\;0
\end{align}
by the reproducing property \eqref{eq:reproducing}, so $f-\Pi^n f\perp V(X_n)$. Since $\Pi^n$ is an orthogonal projection with nontrivial range $V(X_n)$ and nontrivial orthogonal complement $V(X_n)^\perp$, it follows that
\begin{align}\label{eq:operotOne}
\Vert \Pi^n \Vert_{\mathcal{L}(\Hk)}=1 \quad \text{and}\quad \Vert I-\Pi^n \Vert_{\mathcal{L}(\Hk)}=1.
\end{align}
Furthermore, under the assumption $f\in\Hk$ we obtain convergence in the RKHS norm provided the centers become dense in $\Omega$.
\begin{theorem}\label{theo:convGeneral}
Let $\Omega\subset\R^{N}$ be a domain and let $k:\Omega\times\Omega\to\R$ be a continuous, s.p.d.\! kernel with RKHS $\Hk$. Let $(X_n)_{n\in\N}$ be a nested family of finite, pairwise distinct point sets $X_n\subset\Omega$ with dense union,
\[
  X_1\subset X_2\subset\cdots,\qquad \overline{\bigcup_{n\in\N} X_n}=\overline{\Omega},
\]
and let $f\in \Hk$. Then
\[
  \lim_{n\to\infty} \norm{f-\Pi^n f }_{\Hk} =0.
\]
\end{theorem}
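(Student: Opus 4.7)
The plan is to use the fact, already established in the excerpt, that $\Pi^n$ is the $\Hk$-orthogonal projection onto the closed subspace $V(X_n)\subset\Hk$, and that the nested inclusion $X_n\subset X_{n+1}$ yields $V(X_n)\subset V(X_{n+1})$. A standard Hilbert-space result then says that for an increasing sequence of closed subspaces $V_n\subset\Hk$ with closure $V_\infty:=\overline{\bigcup_n V_n}$, the orthogonal projections $P_{V_n}$ converge strongly to $P_{V_\infty}$. Applied here, this reduces the theorem to identifying the closure
\[
  V_\infty \;=\; \overline{\bigcup_{n\in\N} V(X_n)} \;\subset\;\Hk
\]
with all of $\Hk$, because then $\Pi^n f = P_{V_n} f \to P_{V_\infty} f = f$.

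The second step is to prove $V_\infty=\Hk$. First, I would argue that the total family $\{k(x,\cdot):x\in\Omega\}$ spans a dense subset of $\Hk$. Indeed, if $g\in\Hk$ is orthogonal to this family, then the reproducing property \eqref{eq:reproducing} gives $g(x)=\langle g,k(x,\cdot)\rangle_{\Hk}=0$ for every $x\in\Omega$, so $g=0$; by the Hilbert-space duality this yields density. Second, I would upgrade this to density of the smaller family $\{k(x,\cdot):x\in\bigcup_n X_n\}$: for any $x\in\Omega$, density of $\bigcup_n X_n$ in $\overline{\Omega}$ together with $x\in\Omega$ produces a sequence $(y_j)\subset\bigcup_n X_n$ with $y_j\to x$. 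The continuity estimate
\[
  \|k(y,\cdot)-k(x,\cdot)\|_{\Hk}^2 \;=\; k(y,y)-2k(x,y)+k(x,x),
\]
already recorded in the excerpt, then yields $k(y_j,\cdot)\to k(x,\cdot)$ in $\Hk$. Hence every generator $k(x,\cdot)$ with $x\in\Omega$ lies in $V_\infty$, and combining with the first step shows $V_\infty=\Hk$.

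Putting the two ingredients together, the strong convergence $P_{V_n}\to P_{V_\infty}=I$ evaluated at any fixed $f\in\Hk$ gives $\|\Pi^n f - f\|_{\Hk}\to 0$, as claimed.

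I do not expect a serious obstacle: both the monotone-projection convergence and the total-family density argument are routine. The only small subtlety is the passage from density in $\overline{\Omega}$ to approximation of points of $\Omega$ by centers, which is harmless since we only need to approximate $k(x,\cdot)$ for $x\in\Omega$, and such $x$ can be reached by interior points from $\bigcup_n X_n$. Continuity of $y\mapsto k(y,\cdot)$ on $\Omega$ takes care of the rest.
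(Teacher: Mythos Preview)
Your argument is correct, and it takes a somewhat different route from the paper's own proof. The paper argues directly: it shows that $a_n:=\|s_f^n\|_{\Hk}$ is monotone and bounded by $\|f\|_{\Hk}$, then uses the Pythagoras identity $\|s_f^m-s_f^n\|_{\Hk}^2=a_m^2-a_n^2$ (valid since $(\Pi^m-\Pi^n)f\perp\Pi^n f$) to conclude that $(s_f^n)$ is Cauchy with limit $s_f\in\Hk$; finally it identifies $s_f=f$ by checking pointwise agreement on the dense set $\bigcup_n X_n$ and invoking continuity of both functions. By contrast, you package the first part into the standard monotone-projection lemma $P_{V_n}\to P_{V_\infty}$ strongly, and you identify the limit not via pointwise evaluation of $s_f$ but by showing $V_\infty=\Hk$ through totality of $\{k(x,\cdot):x\in\Omega\}$ combined with the feature-map continuity $y\mapsto k(y,\cdot)$. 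Your approach is more structural and immediately reusable (once $V_\infty=\Hk$ is known, convergence for every $f$ is automatic), while the paper's argument is fully self-contained and avoids quoting the abstract projection lemma. Both rely on the same two ingredients—orthogonal-projection structure and kernel continuity—so the difference is one of presentation rather than depth.
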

\begin{proof}
Define $a_n:=\lVert s_f^n  \rVert_{\Hk}$. Since $X_n\subset X_{n+1}$, the feasible set in \eqref{eq:rkhs-interp} for $n{+}1$ is contained in that for $n$, hence $a_n\le a_{n+1}$ for all $n$. Moreover, $f$ is feasible for every $n$, so $a_n\le \norm{f}_{\Hk}$. Thus $\left(a_n\right)_{n \in \mathbb{N}}$ is monotonically increasing and bounded, hence convergent.\\
For $m>n$, write $s_f^m=\Pi^m f$ and $s_f^n=\Pi^n f$. Because $V(X_n)\subset V(X_m)$, we have $\Pi^n\Pi^m=\Pi^n$ and since orthogonal projections are self-adjoint, it follows
\[
  \inner{(\Pi^m-\Pi^n)f}{\Pi^n f}_{\Hk}
  \;=\; \inner{f}{(\Pi^m-\Pi^n)\Pi^n f}_{\Hk}
  \;=\; \inner{f}{\Pi^n f-\Pi^n f}_{\Hk}
  \;=\; 0,
\]
so $(\Pi^m-\Pi^n)f \perp \Pi^n f$. Hence, by the Pythagoras identity,
\[
  \norm{s_f^m}_{\Hk}^2
  \;=\; \norm{s_f^n}_{\Hk}^2 + \norm{s_f^m-s_f^n}_{\Hk}^2 \;
\Longrightarrow \; \norm{s_f^m-s_f^n}_{\Hk}^2=a_m^2-a_n^2\to 0\] as $m,n\to\infty$. Thus $\bigl(s_f^n\bigr)_{n \in \mathbb{N}}$ a is Cauchy sequence in $\Hk$, and completeness yields a limit $s_f:=\lim_{n\to\infty}s_f^n$.\\
For any $x\in\bigcup_{n} X_n$, choose $m$ with $x\in X_m$, whence $s_f^{m}(x)=f(x)$ and therefore $s_f(x)=\lim_{n\to\infty}s_f^{n}(x)=f(x)$. Since $k$ is continuous, every element of $\Hk$ is continuous; hence, in particular, 
$s_f$ and $f$ are continuous. Because $\bigcup_{n}X_n$ is dense in $\Omega$, it follows that $s_f=f$ on $\Omega$. Consequently,
\[
  \lim_{n\to\infty}\norm{f-\Pi^n f}_{\Hk}
  \;=\; \lim_{n\to\infty}\norm{s_f-s_f^n}_{\Hk}
  \;=\; 0.
\]
\end{proof}

\noindent The minimal-norm interpolation problem is well-posed and admits a unique solution, which -- under pairwise distinct nodes -- can be written in the Lagrange form \eqref{eq:lagrange-rep}, even when the data are samples of a target function $f\notin\Hk$. We next record a criterion that characterizes membership $f\in\Hk$ via the boundedness of the associated sequence of minimal-norm interpolants.
\begin{theorem}\label{thm:bounded-interpolants}
Let $\Omega\subset\R^{N}$ be a domain and let $k:\Omega\times\Omega\to\R$ be a continuous, s.p.d.\! kernel with RKHS $\Hk$. Let $(X_n)_{n\in\N}$ be a nested family of finite, pairwise distinct point sets $X_n\subset\Omega$ with dense union,
\[
  X_1\subset X_2\subset\cdots,\qquad \overline{\bigcup_{n\in\N} X_n}=\overline{\Omega}.
\]
For $f\in C(\Omega)$, let $\left(s_f^n \right)_{n \in \mathbb{N}} \subset\Hk$ be the sequence of  unique minimal-norm interpolants of $f|_{X_n}$. Then
\[
  \sup_{n\in\N}\norm{s_f^n}_{\Hk}<\infty \quad\Longleftrightarrow\quad f\in\Hk.
\]
\end{theorem}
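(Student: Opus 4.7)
The plan is to prove both implications, with the easy direction handled first. For ``$\Leftarrow$'', if $f\in\Hk$, then by \eqref{eq:Pi_Hk}--\eqref{eq:operotOne} we have $s_f^n=\Pi^n f$ and $\|s_f^n\|_{\Hk}=\|\Pi^n f\|_{\Hk}\le \|f\|_{\Hk}$ uniformly in $n$. (This also already follows from the proof of Theorem~\ref{theo:convGeneral}.) All the work is in ``$\Rightarrow$''.

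Set $a_n:=\|s_f^n\|_{\Hk}$. First I would observe that $(a_n)$ is monotone non-decreasing: the feasible set of \eqref{eq:rkhs-interp} for $X_{n+1}$ is contained in that for $X_n$ (any function matching $f$ on $X_{n+1}$ automatically matches $f$ on $X_n\subset X_{n+1}$), so the minimum can only grow. Combined with the standing hypothesis $\sup_n a_n<\infty$, this yields convergence of $(a_n^2)$.

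Next I would establish the orthogonality $s_f^m-s_f^n\perp V(X_n)$ in $\Hk$ for every $m>n$, directly from the interpolation conditions rather than from any projection interpretation. For $v=\sum_j c_j\, k(x_j,\cdot)\in V(X_n)$ with $x_j\in X_n$, the reproducing property \eqref{eq:reproducing} gives
\[
\langle s_f^m-s_f^n,\,v\rangle_{\Hk}=\sum_j c_j\bigl(s_f^m(x_j)-s_f^n(x_j)\bigr)=0,
\]
since both $s_f^m$ and $s_f^n$ agree with $f$ on $X_n$. Since $s_f^n\in V(X_n)$, Pythagoras yields $a_m^2=a_n^2+\|s_f^m-s_f^n\|_{\Hk}^2$, and hence $\|s_f^m-s_f^n\|_{\Hk}^2=a_m^2-a_n^2\to 0$. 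Therefore $(s_f^n)$ is Cauchy in $\Hk$ and converges to some $g\in\Hk$. By the reproducing property and continuity of the inner product, $s_f^n(x)=\langle s_f^n,k(x,\cdot)\rangle_{\Hk}\to \langle g,k(x,\cdot)\rangle_{\Hk}=g(x)$ for every $x\in\Omega$. For $x\in\bigcup_n X_n$, eventually $s_f^n(x)=f(x)$, so $g=f$ on $\bigcup_n X_n$. Both $g$ (as an element of $\Hk$ with continuous kernel) and $f$ (by assumption) are continuous on $\Omega$, and $\bigcup_n X_n$ is dense in $\Omega$, so $g\equiv f$ on $\Omega$. Consequently $f=g\in\Hk$.

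The only delicate point is that, in contrast to the proof of Theorem~\ref{theo:convGeneral}, we cannot write $s_f^n=\Pi^n f$ or argue via self-adjointness of $\Pi^n$, because $\Pi^n$ acts on $\Hk$ and here $f$ is not assumed to belong to $\Hk$. The key observation is that monotonicity of $(a_n)$ and the crucial orthogonality $s_f^m-s_f^n\perp V(X_n)$ both follow from the interpolation conditions alone, which bypasses this issue and gives the Cauchy property needed to land the weak limit inside $\Hk$.
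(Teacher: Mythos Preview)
Your proof is correct, but it takes a genuinely different route from the paper's for the direction ``$\Rightarrow$''. The paper invokes weak sequential compactness of bounded sets in the Hilbert space $\Hk$: from $\sup_n\|s_f^n\|_{\Hk}<\infty$ it extracts a weakly convergent subsequence $s_f^{n_l}\rightharpoonup s$, and since point evaluations $\langle\,\cdot\,,k(x,\cdot)\rangle_{\Hk}$ are continuous linear functionals, weak convergence already gives $s(x)=f(x)$ on the dense union, hence $s=f$ by continuity. Your argument instead adapts the strong-convergence machinery from the proof of Theorem~\ref{theo:convGeneral}: you show $(\|s_f^n\|_{\Hk})$ is monotone and bounded, derive the Pythagorean identity $\|s_f^m\|^2=\|s_f^n\|^2+\|s_f^m-s_f^n\|^2$ from $s_f^m-s_f^n\perp V(X_n)$, and conclude that $(s_f^n)$ is Cauchy and hence \emph{strongly} convergent in $\Hk$. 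The key insight you correctly isolate is that both monotonicity and the orthogonality relation follow from the interpolation conditions alone, without ever writing $s_f^n=\Pi^n f$ (which would require $f\in\Hk$). Your approach is more elementary---it avoids weak compactness entirely---and yields the stronger conclusion of full-sequence norm convergence, whereas the paper's argument is shorter and only needs boundedness, not monotonicity. One minor remark: your closing phrase ``land the weak limit inside $\Hk$'' is a slight misnomer, since you actually obtain a strong limit.
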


\begin{proof}
($\Leftarrow$) If $f\in\Hk$, then $f$ is feasible for each interpolation problem, and minimality yields $\|s_f^n\|_{\Hk}\le \norm{f}_{\Hk}$ for all $n$.

\medskip
\noindent ($\Rightarrow$) Suppose $\sup_n \|s_f^n\|_{\Hk}<\infty$. Since $\Hk$ is a Hilbert space, the sequence of interpolants is weakly relatively compact; there exist a subsequence $(s_f^{n_l})$ and $s\in\Hk$ with $s_f^{n_l}\rightharpoonup s$ in $\Hk$. Fix $x$ in the dense set $\bigcup_n X_n$. Then $s_f^{n_l}(x)=f(x)$ for all sufficiently large $l$. Using the reproducing property \eqref{eq:reproducing} and weak convergence,
\[
  f(x) \;=\; \lim_{l\to\infty}\inner{s_f^{n_l}}{k(x,\cdot)}_{\Hk}
  \;=\; \inner{s}{k(x,\cdot)}_{\Hk}
  \;=\; s(x).
\]
Hence $s=f$ on a dense set. Because $k$ is continuous, every function from $\Hk$ is continuous on $\Omega$, so $s,f\in C (\Omega)$ and continuity implies $s=f$ on $\Omega$. Thus $f\in\Hk$.
\end{proof}
\noindent The previous result provides a verification condition for the RKHS membership that can be examined numerically. Specifically, compute \eqref{eq:minimalNorm} along a nested sequence of node sets $(X_n)_{n\in\N}$ whose union is dense in the ambient domain $\Omega$. If the sequence $\|s_f^n\|_{\Hk}$ remains bounded, then $f\in\Hk$; if it diverges, then $f\notin\Hk$. In many practical applications one observes divergence, indicating that the underlying function does not belong to the RKHS, but also observes pointwise convergence of the minimal-norm interpolants. This is precisely the question of whether we can escape the native space via interpolation.
Before analyzing this phenomenon in detail, we introduce the kernel classes most relevant for the remainder of this work.

\subsection{Translation-invariant kernels with algebraic spectral decay}
\label{subsec:ti-algebraic}
We study translation-invariant kernels on \(\R^{N}\) whose spectral densities decay algebraically. A kernel \(k\colon\R^{N}\times\R^{N}\to\R\) is called  {translation-invariant} if there exists \(\Phi\colon\R^{N}\to\R\) with \(k(x,y)=\Phi(x-y)\) for all \(x,y\in\R^{N}\). Assume \(\Phi\in C(\R^{N})\cap L^{1}(\R^{N})\) is real-valued and positive definite (in the sense of \cite{W04}),  then \(k\) is s.p.d.\! and its  associated RKHS on \(\R^{N}\) admits the Fourier characterization
\begin{equation}\label{eq:FourierRKHS}
  \HkR \;=\; \Bigl\{\, f\in L^{2}(\R^{N})\cap C(\R^{N}) \ \Bigm|\ 
  \widehat{f} / \sqrt{\widehat{\Phi}}\in L^{2}(\R^{N}) \Bigr\},
\end{equation}
with inner product
\begin{equation}\label{eq:FourierInner}
  \inner{f}{g}_{\HkR}
  \;=\; (2\pi)^{-N/2}\int_{\R^{N}}
  \frac{\widehat{f}(\omega)\,\overline{\widehat{g}(\omega)}}{\widehat{\Phi}(\omega)}\,\mathrm{d}\omega,
  \qquad f,g\in\HkR,
\end{equation}
where \(\widehat{\cdot}\) denotes the Fourier transform in the convention consistent with \eqref{eq:FourierInner} (see \cite[Thm.~10.12]{W04}). A notable subclass is given by those \(\Phi\) whose spectra satisfy two-sided algebraic decay: there exist \(c_{\Phi},C_{\Phi}>0\) and a  \(\tau>\tfrac N2\) such that
\begin{equation}\label{eq:algebDecay}
  c_{\Phi}\,(1+\|\omega\|_{2}^{2})^{-\tau}
  \;\le\; \widehat{\Phi}(\omega) \;\le\;
  C_{\Phi}\,(1+\|\omega\|_{2}^{2})^{-\tau}
  \quad \forall\,\omega\in\R^{N}.
\end{equation}
Kernels satisfying \eqref{eq:algebDecay} are s.p.d.\! (cf.\ \cite[Thm.~6.11]{W04}), and their native space $\HkR$ coincides, up to equivalent norms, with the Bessel-potential space
\begin{equation}\label{eq:BesselSobolev}
  H^\tau\!\left(\R^{N}\right) \;=\; \Bigl\{\, f\in L^{2}(\R^{N}) \ \Bigm|\ 
  (1+\|\cdot\|_{2}^{2})^{\tau/2}\,\widehat{f}(\cdot)\in L^{2}(\R^{N}) \Bigr\},
\end{equation}
see \cite[Cor.~10.13]{W04}. Bessel-potential spaces are also RKHSs corresponding to the translation-invariant kernel with
\[
  \widehat{\Phi}(\omega)=(1+\|\omega\|_{2}^{2})^{-\tau},\qquad \tau>\tfrac N2
\]
whose kernels have  the closed form
\begin{equation}\label{eq:MaternGeneral}
  k(x,y)
  \;=\; \frac{2^{\,1-\tau}}{(2\pi)^{N/2}\Gamma(\tau)}\,\|x-y\|_{2}^{\,\tau-\frac{N}{2}}\,
  K_{\tau-\frac{N}{2}}\!\bigl(\|x-y\|_{2}\bigr),\qquad x,y\in\R^{N},
\end{equation}
where \(K_{\nu}\) denotes the modified Bessel function of the second kind; see \cite[Thm.~6.13]{W04}. These are the Matérn kernels with smoothness \(\nu:=\tau-\tfrac N2>0\). For half-integer \(\nu\) (equivalently, \(\tau-\tfrac N2\in\tfrac12\N\)) one obtains elementary forms up to a positive constant (cf.\ \cite[p.~41]{fasshauer2007}):
\begin{align}
  k(x,y) &= c_{N,\nu}e^{-\|x-y\|_{2}} 
    && \text{for } \tau=\tfrac{N+1}{2}\;(\nu=\tfrac12), \label{eq:matern:12}\\
  k(x,y) &= c_{N,\nu}\bigl(1+\|x-y\|_{2}\bigr)\,e^{-\|x-y\|_{2}}
    && \text{for } \tau=\tfrac{N+3}{2}\;(\nu=\tfrac32), \label{eq:matern:32}\\
  k(x,y) &= c_{N,\nu}\bigl(3+3\|x-y\|_{2}+\|x-y\|_{2}^{2}\bigr)\,e^{-\|x-y\|_{2}}
    && \text{for } \tau=\tfrac{N+5}{2}\;(\nu=\tfrac52). \label{eq:matern:52}
\end{align}
The Bessel-potential space agrees, as a set and up to equivalent norms, with the (possibly fractional) Sobolev space \(W_2^\tau\left(\mathbb{R}^N\right)\) defined via weak derivatives: writing \(\tau=r+s\) with \(r=\lfloor \tau\rfloor\in\N_{0}\) and \(s\in[0,1)\), one has
\begin{equation}\label{eq:SObolevSpace}
  W_2^\tau\left(\mathbb{R}^N\right) \;=\; \Bigl\{\, f\in L^{2}(\R^{N}) \ \Bigm|\ 
   \partial^{\alpha} f \in L^{2}(\R^{N}) \ \text{for all } |\alpha|\le r \ \text{ and } \ |f|_{W_2^{r+s}\left(\mathbb{R}^N\right)}<\infty \Bigr\},
\end{equation}
with seminorm
\begin{align*}
   |f|_{W_2^m(\mathbb{R}^N)}^{2} \;:=\;
  \sum_{|\alpha|=m}\|\partial^{\alpha}f\|_{L^{2}(\R^{N})}^{2},  
\end{align*}
 Slobodeckij seminorm (interpreted as \(0\) when \(s=0\))
\begin{equation*}
    |f|_{W_2^{r+s}\left(\mathbb{R}^N\right)}^{2}
    \;:=\; \sum_{|\alpha|=r}\int_{\R^{N}}\!\int_{\R^{N}}
    \frac{\bigl|\partial^{\alpha}f(x)-\partial^{\alpha}f(y)\bigr|^{2}}{\|x-y\|_{2}^{\,N+2s}}\,
    \mathrm{d}x\,\mathrm{d}y,
\end{equation*}
and the norm
\begin{equation}\label{eq:SobolevDef}
  \|f\|_{W_2^{r+s}(\R^N)}^{2} \;:=\;
  \sum_{m=0}^r |f|_{W_2^m(\mathbb{R}^N)}^{2} 
  \;+\; |f|_{W_2^{r+s}\left(\mathbb{R}^N\right)}^{2},
\end{equation}
see, e.g., \cite[Thm.~3.16]{mclean2000strongly}. We shall refer to translation-invariant kernels on $\R^N$ whose spectral densities satisfy \eqref{eq:algebDecay} as  {Sobolev kernels of order \(\tau\)}. \\

\noindent Generally, if $k$ is an s.p.d.\! kernel on $\R^{N}$ and $\Omega\subset\R^{N}$ is nonempty, then the restriction $k|_{\Omega\times\Omega}$ is again s.p.d. By Aronszajn’s seminal work \cite{aronszajn1950theory}, its RKHS is
\begin{align}\label{eq:restRKHS1}
  \Hk \;=\; \{\, f|_{\Omega} : f\in \HkR \,\},
\end{align}
endowed with the minimal-extension norm
\begin{align}\label{eq:restRKHS2}
  \|u\|_{\Hk}
  \;=\;
  \inf\bigl\{\, \|g\|_{\HkR} \ \bigm|\ g\in \HkR,\ g|_{\Omega}=u \,\bigr\},
\end{align}
and the infimum is achieved by a unique minimal norm extension $g_{u}\in\HkR$ of $u$. \\
For Sobolev spaces, a  restricted version corresponding to the global space in \eqref{eq:SObolevSpace} can be defined by 
\begin{equation}\label{eq:SObolevSpaceLocal}
  W_2^\tau\left(\Omega\right) \;=\; \Bigl\{\, f\in L^{2}(\Omega) \ \Bigm|\ 
   \partial^{\alpha} f \in L^{2}(\Omega) \ \text{for all } |\alpha|\le r \ \text{ and } \ |f|_{W_2^{r+s}\left(\Omega\right)}<\infty \Bigr\},
\end{equation}
with 
\begin{align}
   |f|_{W_2^m(\Omega)}^{2} \;&:=\;
  \sum_{|\alpha|=m}\|\partial^{\alpha}f\|_{L^{2}(\Omega)}^{2}, \notag \\
     |f|_{W_2^{r+s}\left(\Omega\right)}^{2}
    \;&:=\; \sum_{|\alpha|=r}\int_{\Omega}\!\int_{\Omega}
    \frac{\bigl|\partial^{\alpha}f(x)-\partial^{\alpha}f(y)\bigr|^{2}}{\|x-y\|_{2}^{\,N+2s}}\,
    \mathrm{d}x\,\mathrm{d}y, \notag \\
   \|f\|_{W_2^{r+s}(\Omega)}^{2} \;&:=\;
  \sum_{m=0}^r |f|_{W_2^m(\Omega)}^{2} 
  \;+\; |f|_{W_2^{r+s}\left(\Omega\right)}^{2}. \label{eq:localSobNorm}
\end{align}
Following the discussion in the proof of Lemma~3.1 in \cite{Narcowich2004}, if \(\Omega\subset\R^{N}\) is a bounded Lipschitz domain and \(\tau>\frac{N}{2}\), there exists a continuous extension operator \(\mathcal{E}\colon W_2^\tau(\Omega)\to W_2^\tau(\R^{N})\) with \(\left(\mathcal{E} f\right)|_{\Omega}=f\) for all \(f\in W_2^\tau(\Omega)\) and a constant \(C_{\mathcal{E}}>0\) such that
\[
    \| \mathcal{E} f \|_{W_2^\tau\left(\R^{N}\right)} \;\le\; C_{\mathcal{E}}\; \| f \|_{W_2^\tau\left(\Omega\right)}.
\]
Combining this with the argument in  the proof of Corollary~10.48 in \cite{W04} shows that, for a Sobolev kernel with \(\tau>\tfrac{N}{2}\) and bounded Lipschitz domain \(\Omega \subset \R^N\), the RKHS \(\Hk\) coincides with \(W_2^\tau(\Omega)\) with equivalent norms, i.e., there exist constants \(c_{\tau,\Omega},C_{\tau,\Omega}>0\) such that
\begin{equation}\label{eq:NormEquiLocal}
  c_{\tau,\Omega}\,\|u\|_{W_2^{\tau}(\Omega)} \;\le\; \|u\|_{\Hk} \;\le\; C_{\tau,\Omega}\,\|u\|_{W_2^{\tau}(\Omega)}
  \qquad \forall\,u\in W_2^{\tau}(\Omega).
\end{equation}
As a consequence of the norm equivalences on bounded Lipschitz domains and on the whole space \(\R^N\) (for \(\tau>\tfrac N2\)), the spaces \(W^{\tau}_2(\mathbb{R}^N)\) and \(W^{\tau}_2(\Omega)\) are themselves RKHSs, endowed with the norms \eqref{eq:SobolevDef}   and \eqref{eq:localSobNorm}, respectively. On \(\R^{N}\), the reproducing kernel is translation-invariant with spectral density proportional to \((1+\|\omega\|_{2}^{2})^{-\tau}\) (see, e.g., \cite{novak2018reproducing}). On bounded Lipschitz domains, a reproducing kernel still exists, but in general it is {not} translation-invariant. As a one-dimensional illustration, consider \(W^1_2(a,b)\) endowed with
\[
  \inner{u}{v}_{W^1_2(a,b)} \;=\; \int_{a}^{b} u\,v\,\mathrm{d}x \;+\; \int_{a}^{b} u'\,v'\,\mathrm{d}x .
\]
Indeed, the point evaluation is continuous on \(W^1_2(a,b)\); for any \(u\in W^1_2(a,b)\) and \(x\in[a,b]\),
\[
  |u(x)| \;\le\; \frac{1}{\sqrt{b-a}}\,\|u\|_{L^{2}(a,b)}
  \;+\; \sqrt{b-a}\,\|u'\|_{L^{2}(a,b)}
  \;\le\; \left(\frac{1}{\sqrt{b-a}}+\sqrt{(b-a)}\right)\,\|u\|_{W^1_2(a,b)}  
\]
and the associated reproducing kernel is
\begin{align}\label{eq:reproH1}
  k(x,y) \;=\; \frac{1}{\sinh(b-a)}
  \begin{cases}
    \cosh(b-y)\,\cosh(x-a), & x\le y,\\[2pt]
    \cosh(b-x)\,\cosh(y-a), & x>y,
  \end{cases}
\end{align}
obtained by a brief integration-by-parts argument and hyperbolic addition formulas. This shows how the effect of bounded domain destroy translation invariance even when the underlying Sobolev space is an RKHS; such kernels are therefore not Sobolev kernels in the sense of \eqref{eq:algebDecay}. \\

\noindent We next introduce a subclass of Sobolev kernels of integer order that admits a decomposition of the global RKHS norm on $\R^N$ into a sum of local contributions associated with a finite open partition of $\R^N$. This structural property will be crucial in the sequel. 
\begin{definition}[Sobolev kernel with local norm decomposition]\label{def:SObolevWIthLocalNOrm}
Let $k$ be a Sobolev kernel of integer order $\tau \in \mathbb{N}$. We say that $k$ admits a local norm decomposition if, for every open set $\Omega \subset \R^{N}$, there exists a norm $\normW{\cdot}{\mathcal{G}_k(\Omega)}$ on the restricted space $\Hk$ such that:
\begin{enumerate}
\item \textbf{Uniform norm equivalence.} There exist constants $c_{\mathcal{G}_k}, C_{\mathcal{G}_k} > 0$, depending only on $k$ (hence independent of $\Omega$), for which
\[
c_{\mathcal{G}_k}\,\normW{g}{W_2^{\tau}(\Omega)}
\;\le\;
\normW{g}{\mathcal{G}_k(\Omega)}
\;\le\;
C_{\mathcal{G}_k}\,\normW{g}{W_2^{\tau}(\Omega)}
\qquad \forall\, g \in \Hk .
\]

\item \textbf{Additive decomposition on finite partitions.}
For every $f \in \HkR$ and every finite family of pairwise disjoint open sets $\{\Omega_j\}_{j=1}^{P} \subset \R^{N}$ satisfying
$\overline{\bigcup_{j=1}^{P} \Omega_j} = \R^{N}$, we have
\[
\normW{f}{\HkR}^{2}
\;=\;
\sum_{j=1}^{P} \normW{\,f|_{\Omega_j}\,}{\mathcal{G}_k(\Omega_j)}^{2}.
\]
\end{enumerate}
\end{definition}

\noindent The local norm \(\normW{\cdot}{\mathcal{G}_k(\Omega)}\) need not coincide with the RKHS norms induced by the restricted kernels \(k|_{\Omega\times\Omega}\); for instance, \(H^{1}(\R)\) has the Matérn kernel \eqref{eq:matern:12} with $c_{N,\nu}=\tfrac{1}{2}$ as reproducing kernel on \(\R\) and the exact splitting
\[
\normW{f}{W_2^{1}(\R)}^{2}
\;=\;
\normW{\,f|_{(a,b)}\,}{W_2^{1}((a,b))}^{2}
\;+\;
\normW{\,f|_{\R\setminus(a,b)}\,}{W_2^{1}(\R\setminus(a,b))}^{2},
\]
whereas the reproducing kernel of \(H^{1}((a,b))\) is \eqref{eq:reproH1} and not the restriction of \eqref{eq:matern:12}, motivating the introduction of a new norm \(\normW{\cdot}{\mathcal{G}_k(\Omega)}\) in Definition~\ref{def:SObolevWIthLocalNOrm}.\\

\noindent
A broad subclass of Sobolev {kernels admitting a local norm decomposition} arises when \(\tau:=r\in\N\) with \(\tau>\tfrac{N}{2}\) and the spectral density is a polynomial in \(\|\omega\|_{2}^{2}\) with positive coefficients, i.e.,
\begin{equation}\label{eq:polySpec}
  \widehat{\Phi}(\omega)\;=\;\bigl(p(\|\omega\|_{2}^{2})\bigr)^{-1},
  \qquad
  p(t)=\sum_{m=0}^{\tau} a_{m}\,t^{m},\quad a_{m}>0\ (m=0,\dots,\tau).
\end{equation}
By Plancherel’s theorem  and
\(\widehat{\partial^{\alpha} f}(\omega)=(i\omega)^{\alpha}\,\widehat{f}(\omega)\) for all multi-indices \(\alpha\in\N_{0}^{N}\) with \(|\alpha|\le \tau\) (see \cite[Chapter~5]{W04}), one obtains
\begin{equation}\label{eq:localEnergy}
  \|f\|_{\HkR}^{2}
  \;=\;(2\pi)^{-N/2}\!\int_{\R^{N}} p(\|\omega\|_{2}^{2})\,|\widehat{f}(\omega)|^{2}\,\mathrm{d}\omega
  \;=\;(2\pi)^{-N/2}\!\sum_{m=0}^{\tau} a_{m}\!\!\sum_{|\alpha|=m}\!\frac{m!}{\alpha!}\,
  \|\partial^{\alpha} f\|_{L^{2}(\R^{N})}^{2},
\end{equation}
where \(\alpha!:=\alpha_{1}!\cdots \alpha_{N}!\) and the second equality uses the multinomial expansion
\[
  \bigl(\|\omega\|_{2}^{2}\bigr)^{m}
  \;=\;(\omega_{1}^{2}+\cdots+\omega_{N}^{2})^{m}
  \;=\;\sum_{|\alpha|=m}\frac{m!}{\alpha!}\,\omega^{2\alpha}.
\]
For any open set $\Omega\subset\R^{N}$ define the local weighted Sobolev norm (which plays the role of the $\mathcal{G}_{k}(\Omega)$ norm in Definition~\ref{def:SObolevWIthLocalNOrm}) by
\begin{equation}\label{eq:def-local-G}
  \|g\|_{W^{\tau}_{2,p}(\Omega)}^{2}
  \;:=\;(2\pi)^{-N/2}\sum_{m=0}^{\tau} a_{m}\!\!\sum_{|\alpha|=m}\!\frac{m!}{\alpha!}\,
      \|\partial^{\alpha} g\|_{L^{2}(\Omega)}^{2},
  \qquad g\in W^{\tau}_{2}(\Omega).
\end{equation}
For every finite family of pairwise disjoint open sets $\{\Omega_j\}_{j=1}^{P} \subset \R^{N}$ satisfying
$\overline{\bigcup_{j=1}^{P} \Omega_j} = \R^{N}$, we have  
\[
\normW{f}{\HkR}^{2}
\;=\;
\sum_{j=1}^{P} \normW{\,f|_{\Omega_j}\,}{W^{\tau}_{2,p}(\Omega_j)}^{2}.
\]
Moreover, \(W^{\tau}_{2,p}(\Omega)\simeq W^{\tau}_{2}(\Omega)\), and the norms are equivalent. Indeed,  
we have for all \(f\in W_{2}^{\tau}(\Omega)\),
\[
  (2\pi)^{-N/2}\,\min_{\substack{0\le m\le \tau\\ |\alpha|=m}}\Bigl\{a_{m}\,\frac{m!}{\alpha!}\Bigr\}\,\|f\|_{W_{2}^{\tau}(\Omega)}^{2}
  \;\le\; \|f\|_{W^{\tau}_{2,p}(\Omega)}^{2}
  \;\le\; (2\pi)^{-N/2}\,\max_{\substack{0\le m\le \tau\\ |\alpha|=m}}\Bigl\{a_{m}\,\frac{m!}{\alpha!}\Bigr\}\,\|f\|_{W_{2}^{\tau}(\Omega)}^{2}.
\]
In particular, choosing \(\widehat{\Phi}(\omega)=(1+\|\omega\|_{2}^{2})^{-\tau}\) with \(\tau\in\N\) corresponds to
\(p(t)=(1+t)^{\tau}\) with strictly positive binomial coefficients, so Matérn kernels with integer
\(\tau\) are Sobolev kernels admitting a local norm decomposition and 
 constitute prototypical members of this class.

\subsection{Bounds for functions in Sobolev spaces with scattered zeros}\label{sec:ScatterZero}
\noindent Generally, the error analysis for kernel interpolation with Sobolev kernels is based on bounds for functions in Sobolev spaces with scattered zeros, which is also crucial for the subsequent escaping the native space analysis; to formulate it, we introduce two geometric quantities that measure, respectively, how well a node set fills the domain and how well its points are separated.
\begin{definition}\label{def:fillSep}
For a finite set $X_n \subset \Omega \subset \mathbb{R}^N$, the  {fill distance} and {separation distance} are
\[
h_{X_n,\Omega} \;:=\; \sup_{x \in \Omega}\;\min_{y \in X_n}\,\|x-y\|_{2},
\qquad
q_{X_n} \;:=\; \tfrac{1}{2}\min_{\substack{x,y \in X_n\\ x \neq y}}\|x-y\|_{2}.
\]
Moreover, the sequence $\{X_n\}_{n\in\mathbb{N}}$ is  {quasi‐uniform} with ratio $\rho\ge1$ if
\[
\frac{h_{X_n,\Omega}}{q_{X_n}}\;\le\;\rho\qquad\text{for all }n\in\mathbb{N}.
\]
\end{definition}
\noindent Geometrically, $h_{X_n,\Omega}$ is the radius of the largest ball centered in $\Omega$ that does not meet $X_n$ (so $h_{X_n,\Omega}\!\to0$ means that $X_n$ increasingly fills $\Omega$), whereas $q_{X_n}$ equals half the minimal pairwise distance in $X_n$ and prevents clustering; by construction,
$
q_{X_n} 
\;\le\;
 h_{X_n,\Omega}.
$
Thus, for a quasi‐uniform sequence with mesh ratio $\rho$, the quotient $h_{X_n,\Omega}/q_{X_n}$ is uniformly bounded between $1$ and $\rho$. If $\Omega\subset\mathbb{R}^N$ is bounded and $\{X_n\}_{n\in\mathbb{N}}$ is {quasi‐uniform}, Proposition~14.1 in \cite{W04} yields the fill-distance estimate
\begin{equation}\label{eq:fillN}
h_{X_n,\Omega}\;\le\;C_{\Omega,N,\rho}\,n^{-1/N},
\end{equation}
where $C_{\Omega,N,\rho}$ depends only on $\Omega$, the ambient dimension $N$, and the mesh ratio $\rho$, and \eqref{eq:fillN} will be used repeatedly below.
\noindent To obtain convergence rates we assume an interior cone condition.
\begin{definition}[interior cone condition]\label{def:interior}
A set $\Omega\subset\mathbb{R}^N$ satisfies an interior cone condition if there exist an angle $\eta\in(0,\tfrac{\pi}{2})$ and a radius $R>0$ such that for every $x\in\Omega$ there is a unit vector $\zeta(x)\in\mathbb{R}^N$ with
\[
\bigl\{\,y\in\mathbb{R}^N:\ 0<\|y-x\|_{2}\le R\ \text{ and }\ \zeta(x)\cdot\tfrac{y-x}{\|y-x\|_{2}}\ge \cos\eta\,\bigr\}\ \subset\ \Omega.
\]
\end{definition}
\noindent In particular, every bounded Lipschitz domain $\Omega\subset\mathbb{R}^N$ satisfies an interior cone condition for some parameters $(\eta,R)$ that depend only on the Lipschitz constants of the local graphs describing $\partial\Omega$; see \cite[Lem.~1.5]{Krieg2023}.
\noindent With these notions in place we state the sampling inequality that underpins the convergence of Sobolev  kernel interpolation; it is a specialization of \cite[Thm.~11.32]{W04} to our notation.
\begin{theorem}\label{theo:EstSemiNormSobolev}
Let $\Omega \subset \mathbb{R}^N$ be a bounded set that satisfies an interior cone condition with angle $\eta$ and radius $R$, and let \(\tau>0\) satisfy \(\lfloor \tau \rfloor > m + \frac{N}{2}\) with $m \in \mathbb{N}_0$. If $X_n\subset\Omega$ has fill distance $h_{X_n,\Omega}\le h_1(\tau,\eta)\,R$, where
\begin{align}\label{eq:interiorH}
h_1(\tau,\eta) \;:=\; \frac{\sin\eta\,\sin\eta_0}{8\tau^2\,(1+\sin\eta)(1+\sin\eta_0)}\quad\text{with}\quad \eta_0 \;:=\; 2\arcsin\!\Bigl(\frac{\sin\eta}{4(1+\sin\eta)}\Bigr),
\end{align}
then every $u\in W_2^\tau(\Omega)$ with $u(x)=0$ for all $x\in X_n$ satisfies
\begin{align}\label{eq:resTheoScatterZero}
|u|_{W_2^{m}(\Omega)} \;\le\; C_{\text{\tiny{SZ}}}\,h_{X_n,\Omega}^{\,\tau-m}\,|u|_{W_2^{\tau}(\Omega)} \quad\text{and}\quad \|u\|_{L^{\infty}(\Omega)} \;\le\; C_{\text{\tiny{SZ}}}\,h_{X_n,\Omega}^{\,\tau-N/2}\,|u|_{W_2^{\tau}(\Omega)}
,
\end{align}
where $C_{\text{\tiny{SZ}}}>0$ depends only on $N,\tau,\eta$ and is independent of $u,h_{X_n,\Omega},R$, and $\Omega$.
\end{theorem}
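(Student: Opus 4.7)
The plan is to follow the local polynomial reproduction strategy combined with a Bramble--Hilbert estimate, as developed in Wendland's monograph; the present statement is a specialization of \cite[Thm.~11.32]{W04} with the constants made quantitative through \eqref{eq:interiorH}.

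First, I would establish a local polynomial reproduction on cones. Fix an arbitrary $x \in \Omega$; the interior cone condition of Definition~\ref{def:interior} furnishes a cone $C(x) \subset \Omega$ with apex $x$, angle $\eta$, and radius $R$. The explicit value $h_1(\tau,\eta)$ in \eqref{eq:interiorH} is precisely tuned so that, provided $h_{X_n,\Omega} \le h_1(\tau,\eta)\,R$, one can locate within a sub-cone of radius proportional to $\tau^{2} h_{X_n,\Omega}$ a subset $\{x_{j_1},\dots,x_{j_L}\} \subset X_n \cap C(x)$ that is unisolvent for polynomials of degree $\le \lfloor\tau\rfloor - 1$, together with coefficients $\{c_\ell(x)\}_{\ell=1}^{L}$ reproducing polynomials:
\begin{equation*}
\sum_{\ell=1}^{L} c_\ell(x)\,p(x_{j_\ell}) \;=\; p(x) \quad \text{for all polynomials $p$ of degree} \le \lfloor\tau\rfloor - 1, \qquad \sum_{\ell=1}^{L} |c_\ell(x)| \;\le\; \kappa,
\end{equation*}
with $\kappa$ depending only on $N,\tau,\eta$. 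This norming-set and Markov-inequality construction is the geometric heart of the proof and is where the quantitative form of $h_1(\tau,\eta)$ enters.

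Next, I would invoke a Bramble--Hilbert estimate on a ball $B_x \subset C(x)$ of radius comparable to $h_{X_n,\Omega}$ which contains the reproduction nodes: there is a polynomial $p$ of degree $\le \lfloor\tau\rfloor - 1$ with
\begin{equation*}
|u - p|_{W_2^{k}(B_x)} \;\le\; C\, h_{X_n,\Omega}^{\,\tau - k}\, |u|_{W_2^{\tau}(B_x)}, \qquad 0 \le k \le \lfloor\tau\rfloor,
\end{equation*}
where the fractional case $\tau \notin \mathbb{N}$ is handled by real (K-functional) interpolation between the two adjacent integer Sobolev scales, matched with the Slobodeckij seminorm from \eqref{eq:SObolevSpaceLocal}. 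Using $u|_{X_n} = 0$ together with polynomial reproduction yields the pointwise identity
\begin{equation*}
u(x) \;=\; (u-p)(x) \;-\; \sum_{\ell=1}^{L} c_\ell(x)\, (u-p)(x_{j_\ell}),
\end{equation*}
and the Sobolev embedding $W_2^{\tau}(B_x) \hookrightarrow L^{\infty}(B_x)$ on a ball of radius $\sim h_{X_n,\Omega}$ (with embedding constant scaling correctly thanks to $\lfloor\tau\rfloor > N/2$) gives
\begin{equation*}
|u(x)| \;\le\; (1+\kappa)\,\|u-p\|_{L^{\infty}(B_x)} \;\le\; C\, h_{X_n,\Omega}^{\,\tau - N/2}\, |u|_{W_2^{\tau}(B_x)}.
\end{equation*}
Taking the supremum over $x \in \Omega$ yields the $L^\infty$ bound in \eqref{eq:resTheoScatterZero}.

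For the $W_2^{m}$-seminorm estimate I would combine the same ingredients with a local norm-equivalence argument on the unisolvent nodes: the vanishing of $u$ there controls the local polynomial $p$ through equivalence of norms on the finite-dimensional polynomial space, yielding $|p|_{W_2^{m}(B_x)} \le C\, h_{X_n,\Omega}^{\,\tau - m}\, |u|_{W_2^{\tau}(B_x)}$; combined with the Bramble--Hilbert estimate for $|u-p|_{W_2^{m}(B_x)}$ this gives $|u|_{W_2^{m}(B_x)} \le C\, h_{X_n,\Omega}^{\,\tau - m}\, |u|_{W_2^{\tau}(B_x)}$, and squaring and summing over a locally finite cover of $\Omega$ with uniformly bounded overlap (whose existence and overlap bound depend only on $N$ and $\eta$) yields the first inequality in \eqref{eq:resTheoScatterZero}. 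The main obstacle is the construction of the uniformly bounded polynomial reproduction coefficients under the sharp geometric threshold $h_{X_n,\Omega} \le h_1(\tau,\eta)R$; together with the real-interpolation step needed to cover non-integer $\tau$, this is the technically demanding part of the argument.
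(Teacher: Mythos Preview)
Your proposal is correct and aligns with the paper's approach: the paper's proof consists solely of the citation ``See \cite[Thm.~11.32]{W04}'', and your sketch is precisely an outline of the argument found there (local polynomial reproduction on cones with norming sets, Bramble--Hilbert on local balls, Sobolev embedding for the $L^\infty$ bound, and a bounded-overlap covering for the seminorm estimate). There is nothing to add beyond noting that the paper treats this result as imported rather than reproved.
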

\begin{proof}
See \cite[Thm.~11.32]{W04}.
\end{proof}
\noindent
Assume now that $\Omega \subset \mathbb{R}^{N}$ is a bounded Lipschitz domain and that the hypotheses of Theorem~\ref{theo:EstSemiNormSobolev} are satisfied for the corresponding cone parameters $(\eta,R)$.  Let   $s_f^n$ denote the kernel interpolant of target function $f\in \Hk$ at the nodes $X_n$ and set $u:=f-s_f^n$, we obtain
\[
\lvert f-s_f^n\rvert_{W_2^m(\Omega)}
\;\le\;
C_{\text{\tiny{SZ}}}'\, h_{X_n,\Omega}^{\,\tau-m}\,\|f\|_{\Hk} \quad\text{and}\quad\lVert f-s_f^n\rVert_{L^{\infty}(\Omega)}
\;\le\;
C_{\text{\tiny{SZ}}}'\, h_{X_n,\Omega}^{\,\tau-N/2}\,\|f\|_{\Hk}. 
\]
Here $C_{\text{\tiny{SZ}}}'>0$ depends only on $N,\tau,\eta$ and on the norm-equivalence constants between $W_2^\tau(\Omega)$ and $\Hk$ (in particular, $C_{\text{\tiny{SZ}}}'$ is independent of $f$ and of the specific choice of $X_n$). Note that $u \in W_2^{\tau}(\Omega)\simeq\Hk$, since $s_f^n \in \Hk$ by construction. The estimate yields the standard convergence
rates for Sobolev kernel interpolation under the native-space assumption
$f \in \Hk$. In the next section we relax this assumption and study convergence for target functions $f\notin\Hk$, for which precise algebraic rates are, in general, not available.

\section{Escaping the native space}\label{sec3}
In this section, we provide a general framework to analyze approximation of target functions lying outside the native RKHS and to identify the norms in which convergence is attained. The principal device for this analysis
is the interpolation operator, which, in standard form for
$
  X_n=\{x_1,\dots,x_n\}\subset\Omega\subset\R^{N}
$
 pairwise distinct centers and $k$ s.p.d.\! with native space $\Hk$, is given by
\eqref{eq:Pi_Hk}. To extend interpolation to target functions $f\notin\mathcal{H}_k(\Omega)$,  we introduce  precise function‐space criteria that (i) specify the Banach space of functions in which the target function lies, and (ii) identify the Banach space of functions  whose norm governs the convergence of the interpolant.
\begin{definition}[$k$-admissible pair of Banach spaces]\label{def:admis-closure}
Let $\Omega\subset\R^{N}$ be a domain and let $k:\Omega\times\Omega\to\R$ be s.p.d.\! with native space $\Hk$.
Two Banach spaces of functions on $\Omega$, denoted $A(\Omega)$ and $B(\Omega)$, form a {$k$-admissible pair on $\Omega$} if we have:
\begin{enumerate}
  \item[(a)] \textbf{Bounded point evaluations in $A(\Omega)$:} there exists $c_A>0$ with
  \[
    |f(x)|\le c_A\|f\|_{A(\Omega)}\qquad\forall\,f\in A(\Omega),~\forall\,x\in\Omega.
  \]
  \item[(b)] \textbf{Continuous embedding:} $\Hk\hookrightarrow A(\Omega)$, i.e., there exists $C_A>0$ with
  \[
    \|f\|_{A(\Omega)}\le C_A\|f\|_{\Hk}\qquad\forall\,f\in \Hk.
  \]
    \item[(c)] \textbf{Density:} $\overline{ \Hk }^{\,\|\cdot\|_{A(\Omega)}}=A(\Omega)$.
  \item[(d)] \textbf{Target-norm control:} $A(\Omega)\hookrightarrow B(\Omega)$, i.e., there exists $C_B>0$ with
  \[
    \|f\|_{B(\Omega)}\le C_B\|f\|_{A(\Omega)}\qquad\forall\,f\in A(\Omega).
  \]
\end{enumerate}
\end{definition}

\noindent Given a $k$-admissible pair $(A(\Omega),B(\Omega))$, we define the extended interpolation operator by
\begin{equation}\label{eq:Pi_C_B}
  \Pi^n_{A,B}:A(\Omega)\to B(\Omega),\qquad
  \Pi^n_{A,B}f=\sum_{i=1}^{n} f(x_i)\,l_{x_i}(\,\cdot\,),
\end{equation}
where the  functions \(\{l_{x_i}\}_{i=1}^n\subset A(\Omega)\subset B(\Omega)\).  Since
\begin{align*}
\bigl\lVert \Pi^n_{A,B} f\bigr\rVert_{B(\Omega)}
&= \left\Vert \sum_{i=1}^n f(x_i)\,l_{x_i}\right\Vert_{B(\Omega)}
\;\le\;
\max_{i=1,...,n}|f(x_i)| \;\sum_{i=1}^n \left\Vert   l_{x_i}\right\Vert_{B(\Omega)} \;\le\;
c_A\lVert f\rVert_{A(\Omega)} \; \sum_{i=1}^n \left\Vert   l_{x_i}\right\Vert_{B(\Omega)},
\end{align*}
it follows that \(\Pi^n_{A,B}\in\mathcal{L}\bigl(A(\Omega),B(\Omega)\bigr)\), with bounded operator norm
\begin{equation}
  \label{eq:operatorNormEstimate}
  \bigl\lVert \Pi^n_{A,B}\bigr\rVert_{\mathcal{L}(A(\Omega),B(\Omega))}
  \;\le\; c_A
    \sum_{i=1}^n \left\Vert   l_{x_i}\right\Vert_{B(\Omega)}.
\end{equation}
\noindent The following theorem -- one of the main contributions of this work -- demonstrates that the growth rate of the operator norm $\bigl\lVert \Pi^n_{A,B}\bigr\rVert_{\mathcal{L}(A(\Omega),B(\Omega))}$ is the decisive factor governing the convergence of kernel interpolation in the $B(\Omega)$-norm for target functions in $A(\Omega)$.
\begin{theorem}\label{theo:conncentionLebeEscaping}
Let $\Omega\subset\R^{N}$ be a domain and let $k:\Omega\times\Omega\to\R$ be a continuous, s.p.d.\! kernel with RKHS $\Hk$  and let $(A(\Omega),B(\Omega))$ be a $k$-admissible pair. Suppose  $(X_n)_{n\in\N}$ is  a nested sequence of finite, pairwise distinct point sets $X_n\subset\Omega$ with dense union,
\[
  X_1\subset X_2\subset\cdots,\qquad \overline{\bigcup_{n\in\N} X_n}=\overline{\Omega}.
\]
Then
\[
  \lim_{n\to\infty}\bigl\|f-\Pi_{A,B}^{n}f\bigr\|_{B(\Omega)}=0\quad\text{for all }f\in A(\Omega)
\]
if and only if the sequence of operator norms $\bigl(\|\Pi^n_{A,B}\|_{\mathcal{L}(A(\Omega),B(\Omega))}\bigr)_{n\in\N}$ is uniformly bounded in $n$.
\end{theorem}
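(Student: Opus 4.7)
The statement is a Banach--Steinhaus-type equivalence, so the plan is to use the uniform boundedness principle for one direction and a standard three-term $\varepsilon/3$ split combined with density for the other.

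For the ``only if'' direction, assume $\Pi^n_{A,B}f\to f$ in $B(\Omega)$ for every $f\in A(\Omega)$. Then for each fixed $f$ the sequence $(\Pi^n_{A,B}f)_{n\in\N}$ is bounded in $B(\Omega)$ (a convergent sequence is bounded), so
\[
\sup_{n\in\N}\|\Pi^n_{A,B}f\|_{B(\Omega)}<\infty\quad\text{for all }f\in A(\Omega).
\]
Since $A(\Omega)$ is a Banach space, $B(\Omega)$ is a normed space, and each $\Pi^n_{A,B}\in\mathcal{L}(A(\Omega),B(\Omega))$ by \eqref{eq:operatorNormEstimate}, the Banach--Steinhaus theorem gives $\sup_{n\in\N}\|\Pi^n_{A,B}\|_{\mathcal{L}(A(\Omega),B(\Omega))}<\infty$.

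For the ``if'' direction, set $M:=\sup_{n}\|\Pi^n_{A,B}\|_{\mathcal{L}(A(\Omega),B(\Omega))}$ and fix $f\in A(\Omega)$ and $\varepsilon>0$. By Definition~\ref{def:admis-closure}(c), $\Hk$ is dense in $A(\Omega)$, so choose $g\in\Hk$ with $\|f-g\|_{A(\Omega)}<\varepsilon$. The error splits as
\[
\|f-\Pi^n_{A,B}f\|_{B(\Omega)}
\le \|f-g\|_{B(\Omega)}+\|g-\Pi^n_{A,B}g\|_{B(\Omega)}+\|\Pi^n_{A,B}(g-f)\|_{B(\Omega)}.
\]
Definition~\ref{def:admis-closure}(d) bounds the first term by $C_B\varepsilon$, and the third by $M\varepsilon$. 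For the middle term, note that on $\Hk$ the operator $\Pi^n_{A,B}$ agrees with $\Pi^n$ from \eqref{eq:Pi_Hk} (same Lagrange formula, same nodes), so Theorem~\ref{theo:convGeneral} yields $\|g-\Pi^n g\|_{\Hk}\to 0$, and by the chained embeddings in Definition~\ref{def:admis-closure}(b),(d),
\[
\|g-\Pi^n_{A,B}g\|_{B(\Omega)}\le C_B\,C_A\,\|g-\Pi^n g\|_{\Hk}\xrightarrow{n\to\infty}0.
\]
Therefore $\limsup_{n\to\infty}\|f-\Pi^n_{A,B}f\|_{B(\Omega)}\le (C_B+M)\varepsilon$, and letting $\varepsilon\to 0$ finishes the proof.

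The only mildly delicate point is to make sure that Theorem~\ref{theo:convGeneral} is legitimately applicable to $g$ and that $\Pi^n g$ really is the $B(\Omega)$-image of $g$ under $\Pi^n_{A,B}$; this is immediate because both operators are defined by the same Lagrange representation \eqref{eq:lagrange-rep} and $g\in\Hk\subset A(\Omega)$. No other subtleties arise: the Banach--Steinhaus step needs only that $A(\Omega)$ is complete, and the density/embedding hypotheses in Definition~\ref{def:admis-closure} are precisely tailored to make the three-term split work.
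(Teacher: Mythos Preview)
Your proof is correct and follows essentially the same route as the paper: the forward direction is the Uniform Boundedness Principle, and the reverse direction is the same three-term split using density of $\Hk$ in $A(\Omega)$, Theorem~\ref{theo:convGeneral} for the middle term, and the embeddings (b),(d) of Definition~\ref{def:admis-closure}. The only cosmetic difference is that the paper fixes the density approximant so that the final bound equals $\varepsilon$, whereas you bound the $\limsup$ by $(C_B+M)\varepsilon$ and let $\varepsilon\to 0$; your added remark that $\Pi^n_{A,B}g$ coincides with $\Pi^n g$ on $\Hk$ is a useful clarification.
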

\begin{proof}
($\Rightarrow$) This is a direct consequence of the Uniform Boundedness Principle (see, e.g., \cite[Theorem 14.1]{conway2019course}).

\medskip
\noindent
($\Leftarrow$) If $\|\Pi^n_{A,B}\|\le\Lambda<\infty$ for all $n$, fix $f\in A(\Omega)$ and $\varepsilon>0$. By density, choose $g\in \Hk$ with $\|f-g\|_{A(\Omega)}\le \varepsilon\big/ \bigl(2\,C_B(1+\Lambda)\bigr)$. By Theorem~\ref{theo:convGeneral} (all its assumptions are satisfied in the present setting), we have
\[
  \|g - \Pi_k^n g\|_{\Hk} \longrightarrow 0
  \quad \text{as } n \to \infty.
\]
Thus there exists $N \in \N$ such that, for all $n \ge N$,
\begin{align*}
  \|g-\Pi^n_{A,B}g\|_{A(\Omega)}  =\left\| g-\sum_{i=1}^{n} g(x_i)\,l_{x_i} \right\|_{A(\Omega)}\le C_A\,\|g-\Pi^n_k g\|_{\Hk}\le \varepsilon\big/\bigl(2\,C_B\bigr).
\end{align*}
Therefore, combing these two inequalities gives, for $n\ge N$,
\begin{align*}
\|f-\Pi^n_{A,B}f\|_{B(\Omega)}
&\le \|f- g\|_{B(\Omega)}+\| g-\Pi^n_{A,B} g\|_{B(\Omega)}+\|\Pi^n_{A,B}( g-f)\|_{B(\Omega)} \\
&\le C_B\|f- g\|_{A(\Omega)}+C_B\| g-\Pi^n_{A,B} g\|_{A(\Omega)}+\Lambda\,C_B\| g-f\|_{A(\Omega)} \\
&\le \frac{\varepsilon}{2}+\frac{\varepsilon}{2}=\varepsilon,
\end{align*}
which proves $\|f-\Pi^n_{A,B}f\|_{B(\Omega)}\to 0$ for $n \to \infty$.
\end{proof}
\noindent In what follows, we focus on Sobolev kernels and on $k$-admissible pairs with merely continuous target functions on $\overline{\Omega}$ and   $L^2(\Omega)$-norm or the supremum norm for the error metric. We analyze the asymptotic behavior of the operator norms of $\Pi_{A,B}^{n}$ to derive rigorous ``escaping the native space'' results. Nevertheless, Theorem~\ref{theo:conncentionLebeEscaping} supplies a general abstract principle that requires only an s.p.d.\! kernel and a $k$-admissible pair, so that alternative choices of $(A(\Omega),B(\Omega))$ are likewise conceivable whenever the conditions of Definition~\ref{def:admis-closure} are met, a point to which we return in the concluding section.

\section{Approximating continuous functions by Sobolev  kernel interpolation in the \(L^2\)-norm}\label{sec4}
In this section we consider
\begin{align}\label{eq:kAd1}
     A(\Omega)=C_{\mathrm{ex}}(\Omega), 
     \qquad 
     B(\Omega)=L^2(\Omega),
\end{align}
on a bounded Lipschitz domain \(\Omega\subset\R^N\), and a Sobolev kernel \(k\) of order \(\tau\) with \(\lfloor \tau \rfloor>\tfrac{N}{2}\). Here,
 we work with the space
\[
C_{\mathrm{ex}}(\Omega)
:=\Bigl\{\, f\in C(\Omega)\;:\;\exists\,\tilde f\in C(\overline{\Omega})\text{ with }\tilde f|_{\Omega}=f \Bigr\},
\quad
\normW{f}{C_{\mathrm{ex}}(\Omega)}
:= \sup_{x\in\Omega}|f(x)|
= \sup_{x\in\overline{\Omega}}|\tilde f(x)|,
\]
i.e., the continuous functions on \(\Omega\) that admit a continuous extension to the compact closure \(\overline{\Omega}\).
The restriction map \(R:C(\overline{\Omega})\to C_{\mathrm{ex}}(\Omega)\) and the extension map \(E:C_{\mathrm{ex}}(\Omega)\to C(\overline{\Omega})\) provide an isometric isomorphism \(C_{\mathrm{ex}}(\Omega)\cong C(\overline{\Omega})\).
Working with \(C_{\mathrm{ex}}(\Omega)\) (equivalently \(C(\overline{\Omega})\)) is crucial for applying the Sobolev embedding theorems and for the density of the Sobolev space in \(C_{\mathrm{ex}}(\Omega)\); note that these features are not available in general for \(C_b(\Omega)\), whose elements need not have a continuous extension on \(\overline{\Omega}\). That the choice \eqref{eq:kAd1} forms a $k$-admissible pair is the content of the following proposition.

\begin{proposition}\label{pro:adCL}
Let \(\Omega \subset \mathbb{R}^N\) be a bounded domain with Lipschitz boundary, and let \(k\) be a Sobolev  kernel of order $\tau$ with  \(\lfloor \tau \rfloor>\tfrac{N}{2}\).  Then \(\bigl(A(\Omega),B(\Omega)\bigr):=\bigl(C_{\mathrm{ex}}(\Omega),L^2(\Omega)\bigr)\) is a $k$-admissible   pair.
\end{proposition}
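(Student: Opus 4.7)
The plan is to verify the four defining conditions (a)--(d) of Definition~\ref{def:admis-closure} in turn, using throughout the norm equivalence $\Hk \simeq W_2^\tau(\Omega)$ from \eqref{eq:NormEquiLocal} together with the elementary but crucial observation that $\lfloor\tau\rfloor>N/2$ implies $\tau>N/2$ strictly. This single inequality is simultaneously the hypothesis required for \eqref{eq:NormEquiLocal} and the classical Sobolev embedding threshold, which is what makes the whole verification work.

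Conditions (a) and (d) are essentially bookkeeping. For (a), the norm on $C_{\mathrm{ex}}(\Omega)$ is by construction the supremum norm, so $|f(x)|\le\|f\|_{C_{\mathrm{ex}}(\Omega)}$ for every $x\in\Omega$ and the constant $c_A=1$ suffices. For (d), since $\Omega$ is bounded, $|\Omega|<\infty$, so
\[
\|f\|_{L^2(\Omega)}\le |\Omega|^{1/2}\,\|f\|_{C_{\mathrm{ex}}(\Omega)}\qquad\forall\,f\in C_{\mathrm{ex}}(\Omega),
\]
giving (d) with $C_B=|\Omega|^{1/2}$.

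For (b) I would invoke the Sobolev embedding theorem on bounded Lipschitz domains: since $\tau>N/2$, there is a continuous embedding $W_2^\tau(\Omega)\hookrightarrow C(\overline{\Omega})$. Composing with the isometric isomorphism $C(\overline{\Omega})\cong C_{\mathrm{ex}}(\Omega)$ induced by the restriction/extension pair $(R,E)$ introduced just above the statement, and with \eqref{eq:NormEquiLocal}, yields the desired bound $\|f\|_{C_{\mathrm{ex}}(\Omega)}\le C_A\,\|f\|_{\Hk}$ for all $f\in\Hk$.

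For (c) I would argue via polynomials. The restriction of any polynomial to $\overline{\Omega}$ lies in $C^\infty(\overline{\Omega})\subset W_2^\tau(\Omega)\simeq\Hk$, since smoothness together with boundedness of $\Omega$ ensures finite Sobolev norm. On the other hand, the Stone--Weierstrass theorem yields uniform density of polynomials in $C(\overline{\Omega})$. Transporting this density through the isomorphism $C(\overline{\Omega})\cong C_{\mathrm{ex}}(\Omega)$ produces $\overline{\Hk}^{\,\|\cdot\|_{C_{\mathrm{ex}}(\Omega)}}=C_{\mathrm{ex}}(\Omega)$, establishing (c). The proof presents no real obstacle; it is a checklist of standard facts, the only point worth explicit attention being the compatibility of hypotheses noted in the first paragraph.
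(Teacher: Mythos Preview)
Your proof is correct and follows essentially the same approach as the paper: verify the four conditions of Definition~\ref{def:admis-closure} via the Sobolev embedding, Stone--Weierstrass, and the trivial $L^2$ bound. The only (minor) difference is in part~(b): the paper first embeds $\Hk\hookrightarrow W_2^{\lfloor\tau\rfloor}(\Omega)$ and then invokes the integer-order Sobolev embedding $W_2^{\lfloor\tau\rfloor}(\Omega)\hookrightarrow C(\overline{\Omega})$, whereas you invoke the fractional embedding $W_2^{\tau}(\Omega)\hookrightarrow C(\overline{\Omega})$ directly; both are standard on bounded Lipschitz domains, so this is a matter of taste rather than substance.
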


\begin{proof}
We verify the four conditions of Definition~\ref{def:admis-closure}:
\begin{itemize}
  \item[a)] Clearly, 
    \[
      |f(x)| \;\le\; \sup_{y\in\Omega}|f(y)| \;=\;\|f\|_{C_{\mathrm{ex}}(\Omega)},
      \quad
      \forall\,x\in\Omega,\;f\in C_{\mathrm{ex}}(\Omega).
    \]
    Hence \(c_{A}=1\).
    \item[b)] 
    First, we prove the continuity of the embedding \(H^{\tau}(\Omega)\hookrightarrow H^{\lfloor\tau\rfloor}(\Omega)\). This is an immediate consequence of the corresponding embedding on the whole space \(\R^{N}\), which follows from the Fourier characterization: for every \(f\in H^{\tau}(\R^{N})\),
\begin{align*}
 \|f\|_{H^{\lfloor\tau\rfloor}(\R^{N})}^{2}
 &= (2\pi)^{-N/2}\int_{\R^{N}} |\widehat{f}(\omega)|^{2}\,(1+\|\omega\|_{2}^{2})^{\lfloor\tau\rfloor}\,\mathrm{d}\omega \\
 &\le (2\pi)^{-N/2}\int_{\R^{N}} |\widehat{f}(\omega)|^{2}\,(1+\|\omega\|_{2}^{2})^{\tau}\,\mathrm{d}\omega
 = \|f\|_{H^{\tau}(\R^{N})}^{2},
\end{align*}
since \(\lfloor\tau\rfloor\le \tau\). Note that, because \(\tau\ge \lfloor\tau\rfloor > N/2\), the space \(H^{\tau}(\Omega)\) and \(H^{\lfloor\tau\rfloor}(\Omega)\) are restricted RKHS according to \eqref{eq:restRKHS1}-\eqref{eq:restRKHS2} associated to the corresponding Matérn kernels. This directly yields  \(H^{\tau}(\Omega)\hookrightarrow H^{\lfloor\tau\rfloor}(\Omega)\). Combining the embedding \(H^{\tau}(\Omega)\hookrightarrow H^{\lfloor\tau\rfloor}(\Omega)\) with the identifications and norm equivalences stated in \eqref{eq:NormEquiLocal} (namely, for \(\Hk\) and \(W_{2}^{\tau}(\Omega)\), for \(H^{\tau}(\Omega)\) and \(W_{2}^{\tau}(\Omega)\), and for \(H^{\lfloor\tau\rfloor}(\Omega)\) and \(W_{2}^{\lfloor\tau\rfloor}(\Omega)\)), we conclude the continuous embedding
\[
\Hk \hookrightarrow W_{2}^{\lfloor\tau\rfloor}(\Omega).
\]
Furthermore, if \(\lfloor\tau\rfloor>\tfrac{N}{2}\) and \(\Omega\) is a Lipschitz domain, there exists a continuous extension and embedding operator
$
 E: W_{2}^{\lfloor\tau\rfloor}(\Omega) \to C (\overline\Omega),
$
which maps each \(f \in W_{2}^{\lfloor\tau\rfloor}(\Omega)\) uniquely to \(  E f \in C (\overline \Omega)\) and satisfies
\[
\|  E f\|_{C (\overline\Omega)} \le C_{  E}\, \|f\|_{W_{2}^{\lfloor\tau\rfloor}(\Omega)}
\quad \text{(see \cite[Chapter 4]{adams2003}).}
\]
Combining the continuous embedding \(\tilde \iota: \Hk \hookrightarrow W_{2}^{\lfloor\tau\rfloor}(\Omega)\) with \( E\) and the restriction map \(R:C(\overline{\Omega})\to C_{\mathrm{ex}}(\Omega)\) yields the continuous embedding operator
\[
\iota :=  R \circ  E \circ \tilde\iota : \Hk \longrightarrow C_{\mathrm{ex}}(\Omega).
\]
 In particular, there is a constant $C_{A }>0$  such that
\[
\|\iota f\|_{C_{\mathrm{ex}}(\Omega)} \le C_{A }  \, \|f\|_{\Hk} \qquad \text{for all } f\in \Hk.
\]

 \item[c)] 
By the same reasoning as in the previous part of the proof, and since
$\lceil \tau \rceil \ge \lfloor \tau \rfloor > \tfrac{N}{2}$, we also conclude that
$ W_2^{\lceil \tau \rceil}(\Omega) \subset W_2^{\tau}(\Omega) \simeq \Hk.$
Additionally, as a consequence of the Stone–Weierstrass theorem, for all
$f \in C(\overline{\Omega})$ and for all $\varepsilon > 0$, there exists a polynomial
$p \in \mathcal{P}(\overline{\Omega})$ with
\begin{align*}
  \| f - p \|_{C(\overline{\Omega})}
  = \| f|_{\Omega} - p|_{\Omega} \|_{C_{\mathrm{ex}}(\Omega)}
  < \varepsilon .
\end{align*}
Furthermore, every polynomial is smooth, hence $p|_{\Omega} \in W_2^{\lceil \tau \rceil}(\Omega)$,
and since $W_2^{\lceil \tau \rceil}(\Omega) \subset \Hk \subset C_{\mathrm{ex}}(\Omega)$, it follows that
\[
  \overline{\Hk}^{\,\|\cdot\|_{C_{\mathrm{ex}}(\Omega)}} \;=\; C_{\mathrm{ex}}(\Omega).
\]

  \item[d)] Finally, since \(\Omega\) is bounded,
    \[
      \|f\|_{L^2(\Omega)}
      \;\le\;\sqrt{|\Omega|}\,\|f\|_{C_{\mathrm{ex}}(\Omega)},
      \quad
      \forall\,f\in C_{\mathrm{ex}}(\Omega),
    \]
    so one may take \(C_{B}=\sqrt{|\Omega|}\).
\end{itemize}
\end{proof}
\noindent 
\noindent We now establish the principal result of this section, which asserts that the interpolation operator remains uniformly bounded in operator norm from \(C_{\mathrm{ex}}(\Omega)\) to \(L^2(\Omega)\) under the assumptions of quasi-uniform distribution of the interpolation nodes. The proof follows the strategy used in the proof of Theorem~1 of \cite{marchi2010}, but in a different norm.
\begin{theorem}\label{th:CLuniOp}
Let \(k\) be a Sobolev kernel of order \(\tau\) with \(\lfloor \tau \rfloor > N/2\). Let \(\Omega \subset \R^{N}\) be a bounded Lipschitz domain satisfying the interior cone condition for the angle \(\eta \in (0,\pi/2)\) and radius \(R>0\). Consider node sets \(X_n \subset \Omega\) with fill distance \(h_{X_n,\Omega}\) and separation distance \(q_{X_n}\) such that
\[
h_{X_n,\Omega} \le h_1(\tau,\eta)\,R,\qquad q_{X_n}<1,\qquad h_{X_n,\Omega} \le \rho\, q_{X_n}
\]
for some \(\rho \ge 1\), where \(h_1(\tau,\eta)\) is the constant from~\eqref{eq:interiorH}. Then the kernel interpolation operator \(\Pi^{n}_{C,L^2}: C_{\mathrm{ex}}(\Omega) \to L^{2}(\Omega)\) associated with \(k\) and centers \(X_n\) satisfies
\[
\bigl\|\Pi^{n}_{C,L^2}\bigr\|_{\mathcal{L}\!\left(C_{\mathrm{ex}}(\Omega),\,L^{2}(\Omega)\right)} \le C_{\Pi_{C,L^2}},
\]
where \(C_{\Pi_{C,L^2}}>0\) depends only on \(k\), on the geometric parameters of \(\Omega\), and on \(\rho\), and is independent of \(n\) and of the particular choice of \(X_n\).
\end{theorem}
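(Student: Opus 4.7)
The plan is to reduce the operator-norm bound to the scattered-zero sampling inequality (Theorem~\ref{theo:EstSemiNormSobolev}) by replacing an arbitrary continuous target $f$ with a smooth, compactly supported surrogate $g$ that agrees with $f$ on $X_n$ and has Sobolev norm scaling like $h_{X_n,\Omega}^{-\tau}\|f\|_{C_{\mathrm{ex}}(\Omega)}$. Concretely, I fix a bump $\phi\in C_c^\infty(\R^N)$ with $\phi(0)=1$ and $\supp\phi\subset B(0,\,1/(2\rho))$, set $\phi_i(x):=\phi\bigl((x-x_i)/h_{X_n,\Omega}\bigr)$, and define
\[
  g \;:=\; \sum_{i=1}^n f(x_i)\,\phi_i \;\in\; C_c^\infty(\R^N).
\]
The quasi-uniformity $h_{X_n,\Omega}\le \rho\,q_{X_n}$ forces the supports $\supp\phi_i\subset B(x_i,\,q_{X_n}/2)$ to be pairwise disjoint, so $\phi_i(x_j)=\delta_{ij}$, $g(x_j)=f(x_j)$, and $\|g\|_{L^\infty(\R^N)}\le\|\phi\|_{L^\infty}\|f\|_{C_{\mathrm{ex}}(\Omega)}$. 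A change of variables on the (possibly fractional) Sobolev norm of each $\phi_i$ gives $\|\phi_i\|_{W_2^{\tau}(\R^N)}^{2}\lesssim h_{X_n,\Omega}^{N-2\tau}$; summing the $n\le C\,h_{X_n,\Omega}^{-N}$ terms using disjoint supports then yields
\[
  \|g\|_{W_2^{\tau}(\R^N)} \;\le\; C_\phi\,h_{X_n,\Omega}^{-\tau}\,\|f\|_{C_{\mathrm{ex}}(\Omega)},
\]
with $C_\phi$ independent of $n$.

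Since $f$ and $g$ coincide on $X_n$, their kernel interpolants coincide: $\Pi^{n}_{C,L^{2}}f=\Pi^{n}g$. The minimal-norm property \eqref{eq:rkhs-interp} combined with the RKHS--Sobolev equivalence \eqref{eq:NormEquiLocal} then yields
\[
  \|\Pi^{n}_{C,L^{2}}f\|_{\Hk}
  \;\le\; \|g|_{\Omega}\|_{\Hk}
  \;\le\; C_{\tau,\Omega}\,\|g\|_{W_2^{\tau}(\Omega)}
  \;\le\; C'\,h_{X_n,\Omega}^{-\tau}\,\|f\|_{C_{\mathrm{ex}}(\Omega)}.
\]
The residual $u:=g|_{\Omega}-\Pi^{n}_{C,L^{2}}f$ lies in $W_2^{\tau}(\Omega)$ and vanishes on $X_n$, and since $h_{X_n,\Omega}\le h_1(\tau,\eta)R$, Theorem~\ref{theo:EstSemiNormSobolev} (with $m=0$) applies:
\[
  \|u\|_{L^{2}(\Omega)}
  \;\le\; C_{\mathrm{SZ}}\,h_{X_n,\Omega}^{\tau}\,|u|_{W_2^{\tau}(\Omega)}
  \;\le\; C''\,h_{X_n,\Omega}^{\tau}\bigl(|g|_{W_2^{\tau}(\Omega)}+|\Pi^{n}_{C,L^{2}}f|_{W_2^{\tau}(\Omega)}\bigr).
\]
The factors $h_{X_n,\Omega}^{\tau}$ and $h_{X_n,\Omega}^{-\tau}$ cancel exactly, so $\|u\|_{L^{2}(\Omega)}\le C'''\|f\|_{C_{\mathrm{ex}}(\Omega)}$ with $C'''$ independent of $n$. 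A triangle inequality finishes the argument:
\[
  \|\Pi^{n}_{C,L^{2}}f\|_{L^{2}(\Omega)}
  \;\le\; \|g|_{\Omega}\|_{L^{2}(\Omega)} + \|u\|_{L^{2}(\Omega)}
  \;\le\; \bigl(|\Omega|^{1/2}+C'''\bigr)\|f\|_{C_{\mathrm{ex}}(\Omega)}.
\]

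The main technical obstacle is the sharp scaling $\|g\|_{W_2^{\tau}(\R^N)}\lesssim h_{X_n,\Omega}^{-\tau}\|f\|_{C_{\mathrm{ex}}(\Omega)}$ for non-integer $\tau$. For integer orders, disjointness of supports makes all seminorms decompose additively and the bound follows by elementary rescaling. For fractional $\tau=r+s$ with $s\in(0,1)$, the non-local Slobodeckij double integral couples distinct bumps, so a pure additive decomposition is unavailable. The off-diagonal pairs must be handled via the separation $|x-y|\ge\tfrac12|x_i-x_j|$ for $x\in\supp\phi_i$, $y\in\supp\phi_j$ (valid because $h_{X_n,\Omega}/\rho\le q_{X_n}\le|x_i-x_j|/2$), followed by a dyadic-shell count of $\{x_j\}_{j\ne i}$ in which quasi-uniformity produces a convergent geometric series in $2^{-2sk}$; the resulting bound recovers the $h_{X_n,\Omega}^{-2\tau}$ scaling and delivers the stated operator-norm constant $C_{\Pi_{C,L^{2}}}$ depending only on $k$, on the geometric parameters of $\Omega$, and on $\rho$.
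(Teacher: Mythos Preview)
Your proof is correct and follows essentially the same strategy as the paper's: build a disjointly supported bump-sum surrogate matching $f$ on $X_n$, apply the scattered-zeros inequality to the difference, and balance the $h_{X_n,\Omega}^{\tau}$ gain against the $h_{X_n,\Omega}^{-\tau}$ cost of the surrogate's Sobolev norm (the paper scales the bumps by $q_{X_n}$ rather than $h_{X_n,\Omega}$, but this is immaterial under quasi-uniformity). The only notable tactical difference is in the fractional Slobodeckij term: your dyadic-shell bookkeeping works, but the paper observes more simply that for any pair $(x,y)$ at most two of the disjointly supported bumps contribute to $\partial^{\alpha}g(x)-\partial^{\alpha}g(y)$, giving a direct pointwise factor-of-$2$ bound $|\partial^{\alpha}g(x)-\partial^{\alpha}g(y)|^{2}\le 2\|f\|_{C_{\mathrm{ex}}(\Omega)}^{2}\sum_{i}|\partial^{\alpha}\phi_i(x)-\partial^{\alpha}\phi_i(y)|^{2}$ and thereby avoiding the off-diagonal summation entirely.
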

\begin{proof}
Let \(\phi\in C_c^\infty(\mathbb{R}^N)\) be a standard bump function with 
\begin{align}\label{eq:standartBump}
\operatorname{supp}\phi\subset B(0,1), 
\quad \phi(0)=1,
\quad \|\phi\|_{L^2(\mathbb{R}^N)}=1,
\end{align}
then  $\phi (\frac{\cdot-x_i}{q_{X_n}} )$ has support in $B(x_i,q_{X_n})$. 
 For any \(f\in C_{\mathrm{ex}}(\Omega)\), we have
\begin{align}
\bigl\|\Pi^n_{C,L^2}f\bigr\|_{L^2(\Omega)}
=& \Bigl\|\sum_{i=1}^n f(x_i)\, l_{x_i}\Bigr\|_{L^2(\Omega)} \notag \\
\le &
\Bigl\|\sum_{i=1}^n f(x_i)\bigl(l_{x_i}-\phi(\tfrac{\cdot-x_i}{q_{X_n}})\bigr)\Bigr\|_{L^2(\Omega)}
+
\Bigl\|\sum_{i=1}^n f(x_i)\,\phi(\tfrac{\cdot-x_i}{q_{X_n}})\Bigr\|_{L^2(\Omega)}.\label{eq:chain000}
\end{align}
We bound the two latter terms separately.\\

\noindent\textbf{(i) The remainder term.}
Set
\[
p_f \;:=\; \sum_{i=1}^n f(x_i)\,\phi\!\Bigl(\tfrac{\cdot-x_i}{q_{X_n}}\Bigr).
\]
Then, we obtain (the arguments for each step are given below)
\begin{align}
\Bigl\Vert \sum_{i=1}^n f(x_i)\!\Bigl( l_{x_i} - \phi\!\Bigl(\tfrac{\cdot-x_i}{q_{X_n}}\Bigr)\Bigr) \Bigr\Vert_{L^2(\Omega)}
&\le C_{\text{\tiny{SZ}}}\, h_{X_n,\Omega}^{\tau} 
\Bigl\Vert \sum_{i=1}^n f(x_i) \Bigl( l_{x_i} - \phi\!\Bigl(\tfrac{\cdot-x_i}{q_{X_n}}\Bigr)\Bigr) \Bigr\Vert_{W^\tau_2(\Omega)} \label{eq:chain2}\\
&\le \frac{C_{\text{\tiny{SZ}}}}{c_{\tau,\Omega}}\, h_{X_n,\Omega}^{\tau} 
\Bigl\Vert \sum_{i=1}^n f(x_i)\!\Bigl( l_{x_i} - \phi\!\Bigl(\tfrac{\cdot-x_i}{q_{X_n}}\Bigr)\Bigr) \Bigr\Vert_{\Hk} \label{eq:chain3}\\
&= \frac{C_{\text{\tiny{SZ}}}}{c_{\tau,\Omega}}\, h_{X_n,\Omega}^{\tau}\,
\bigl\Vert \Pi^n p_f - p_f \bigr\Vert_{\Hk} \label{eq:chain4}\\
&\le \frac{C_{\text{\tiny{SZ}}}}{c_{\tau,\Omega}}\, h_{X_n,\Omega}^{\tau}\,
\bigl\Vert \Pi^n - I \bigr\Vert_{\mathcal{L}(\Hk)}\, \Vert p_f \Vert_{\Hk} \label{eq:chain5}\\
&\le \frac{C_{\text{\tiny{SZ}}}}{c_{\tau,\Omega}}\, h_{X_n,\Omega}^{\tau}\, \Vert p_f \Vert_{\Hk} \label{eq:chain6}\\
&\le \frac{C_{\text{\tiny{SZ}}}}{c_{\tau,\Omega}}\, h_{X_n,\Omega}^{\tau}\, \Vert p_f \Vert_{\HkR} \label{eq:chain7}\\
&\le \frac{C_{\text{\tiny{SZ}}} C_{\tau,\R^N}}{c_{\tau,\Omega}}\, h_{X_n,\Omega}^{\tau}\, \Vert p_f \Vert_{W_2^{\tau}(\R^N)}. \label{eq:chain8}
\end{align}
Here,  \eqref{eq:chain2} follows from is the scattered-zeros estimate \eqref{eq:resTheoScatterZero} for $m:=0$ (applicable since $\phi \in W_2^{\tau}(\Omega)$ as smooth function); \eqref{eq:chain3} uses the norm equivalence \eqref{eq:NormEquiLocal}; \eqref{eq:chain4} follows from \(p_f(x_i)=f(x_i)\) so \(\Pi^n p_f=\sum_i f(x_i)l_{x_i}\); \eqref{eq:chain5} is submultiplicativity; \eqref{eq:chain6} uses the identity for the norm of the orthogonal projection given in \eqref{eq:operotOne}; \eqref{eq:chain7} follows from the minimal-norm extension property \eqref{eq:restRKHS2}; and \eqref{eq:chain8} is the norm equivalence between $\HkR$ and $W_2^{\tau}(\R^N)$ for Sobolev kernels. 
Let $\tau=\lfloor\tau\rfloor+s$ with $s\in[0,1)$. We have
\begin{equation}\label{eq:chain10}
  \|p_f\|_{W_2^{\tau}(\R^N)}^{2}
  \;:=\; \sum_{m=0}^{\lfloor\tau\rfloor} \left|p_f\right|_{W_2^m(\R^N)}^{2}
  \;+\; \left|p_f\right|_{W_2^{\lfloor\tau\rfloor+s}(\R^N)}^{2}.
\end{equation}
Since the $\phi\!\bigl((\cdot-x_i)/q_{X_n}\bigr)$ are translate-dilates with pairwise disjoint supports and
$\partial^\alpha\phi\!\bigl((\cdot-x_i)/q_{X_n}\bigr)=q_{X_n}^{-m}(\partial^\alpha\phi)\!\bigl((\cdot-x_i)/q_{X_n}\bigr)$ for $|\alpha|=m$,
a change of variables in the $L^2$-norm yields
\begin{align}
\sum_{m=0}^{\lfloor\tau\rfloor} |p_f|_{W_2^m(\R^N)}^{2}
&= \sum_{i=1}^n |f(x_i)|^2 \sum_{m=0}^{\lfloor\tau\rfloor}\ \sum_{|\alpha|=m}
\bigl\| \partial^\alpha \phi\!\bigl(\tfrac{\cdot-x_i}{q_{X_n}}\bigr)\bigr\|_{L^2(\R^N)}^2 \notag\\
&= \sum_{i=1}^n |f(x_i)|^2 \sum_{m=0}^{\lfloor\tau\rfloor}\ \sum_{|\alpha|=m}
q_{X_n}^{\,N-2m}\,\|\partial^\alpha \phi\|_{L^2(\R^N)}^2 \label{eq:chain11}\\
&\le n\,\|f\|_{C_{\mathrm{ex}}(\Omega)}^2\, q_{X_n}^{\,N-2 \tau }\,
\sum_{m=0}^{\lfloor\tau\rfloor} |\phi|_{W_2^m(\R^N)}^{2}, \label{eq:chain12}
\end{align}
where the first equality comes from the property of the pairwise disjoint supports and  in \eqref{eq:chain12} we used $0<q_{X_n}\le 1$ so that $q_{X_n}^{N-2m}\le q_{X_n}^{N-2 \tau }$ for $m\le \lfloor\tau\rfloor$. 
For the Slobodeckij seminorm, which is only relevant when $\tau \notin \mathbb{N}_0$, we proceed similarly, but first record the following preliminary estimate valid for all multi-indices $\alpha$ with $|\alpha|=\lfloor\tau\rfloor$:
\begin{align}
\bigl|\partial^{\alpha}p_f(x)-\partial^{\alpha}p_f(y)\bigr|^2
&= \left| \sum_{i=1}^n f(x_i)\!\left(\partial^{\alpha}\phi\!\Bigl(\tfrac{x-x_i}{q_{X_n}}\Bigr)
 - \partial^{\alpha}\phi\!\Bigl(\tfrac{y-x_i}{q_{X_n}}\Bigr) \right) \right|^2 \notag \\
&\le 2\,\|f\|_{C_{\mathrm{ex}}(\Omega)}^2
 \sum_{i=1}^n \left| \partial^{\alpha}\phi\!\Bigl(\tfrac{x-x_i}{q_{X_n}}\Bigr)
 - \partial^{\alpha}\phi\!\Bigl(\tfrac{y-x_i}{q_{X_n}}\Bigr) \right|^2. \label{eq:chain13}
\end{align}
We prove this in two cases:\\
\emph{Case 1:} $x\in B(x_s,q_{X_n})$ and $y\in B(x_j,q_{X_n})$ with $s\neq j$. Then only the indices $s$ and $j$ contribute, and
\begin{align*}
\bigl|\partial^{\alpha}p_f(x)-\partial^{\alpha}p_f(y)\bigr|^2
&= \Bigl|\, f(x_s)\!\left(\partial^{\alpha}\phi\!\Bigl(\tfrac{x-x_s}{q_{X_n}}\Bigr)
- \partial^{\alpha}\phi\!\Bigl(\tfrac{y-x_s}{q_{X_n}}\Bigr)\right)
\\[-2pt]&\qquad\quad
+\, f(x_j)\!\left(\partial^{\alpha}\phi\!\Bigl(\tfrac{x-x_j}{q_{X_n}}\Bigr)
- \partial^{\alpha}\phi\!\Bigl(\tfrac{y-x_j}{q_{X_n}}\Bigr)\right) \Bigr|^{2} \\
&\le \|f\|_{C_{\mathrm{ex}}(\Omega)}^2
\Bigl(\, \bigl|\partial^{\alpha}\phi\!\Bigl(\tfrac{x-x_s}{q_{X_n}}\Bigr)
- \partial^{\alpha}\phi\!\Bigl(\tfrac{y-x_s}{q_{X_n}}\Bigr)\bigr|
\\[-2pt]&\qquad\qquad\qquad\qquad
+ \bigl|\partial^{\alpha}\phi\!\Bigl(\tfrac{x-x_j}{q_{X_n}}\Bigr)
- \partial^{\alpha}\phi\!\Bigl(\tfrac{y-x_j}{q_{X_n}}\Bigr)\bigr| \Bigr)^{2} \\
&\le 2\,\|f\|_{C_{\mathrm{ex}}(\Omega)}^2
\Bigl(\bigl|\partial^{\alpha}\phi\!\Bigl(\tfrac{x-x_s}{q_{X_n}}\Bigr)
- \partial^{\alpha}\phi\!\Bigl(\tfrac{y-x_s}{q_{X_n}}\Bigr)\bigr|^{2}
\\[-2pt]&\qquad\qquad\qquad\qquad
+ \bigl|\partial^{\alpha}\phi\!\Bigl(\tfrac{x-x_j}{q_{X_n}}\Bigr)
- \partial^{\alpha}\phi\!\Bigl(\tfrac{y-x_j}{q_{X_n}}\Bigr)\bigr|^{2}\Bigr) \\
&= 2\,\|f\|_{C_{\mathrm{ex}}(\Omega)}^2
 \sum_{i=1}^n \left| \partial^{\alpha}\phi\!\Bigl(\tfrac{x-x_i}{q_{X_n}}\Bigr)
 - \partial^{\alpha}\phi\!\Bigl(\tfrac{y-x_i}{q_{X_n}}\Bigr) \right|^2 ,
\end{align*}
where, in the last step, we added zero terms corresponding to the remaining indices.\\
\emph{Case 2:} $x,y\in B(x_s,q_{X_n})$. Then only the index $s$ contributes, and
\begin{align*}
\bigl|\partial^{\alpha}p_f(x)-\partial^{\alpha}p_f(y)\bigr|^2
&= \bigl| f(x_s)\bigr|^{2}\!
\left(\partial^{\alpha}\phi\!\Bigl(\tfrac{x-x_s}{q_{X_n}}\Bigr)
- \partial^{\alpha}\phi\!\Bigl(\tfrac{y-x_s}{q_{X_n}}\Bigr)\right)^{2} \\
&\le 2\,\|f\|_{C_{\mathrm{ex}}(\Omega)}^2
 \sum_{i=1}^n \left| \partial^{\alpha}\phi\!\Bigl(\tfrac{x-x_i}{q_{X_n}}\Bigr)
 - \partial^{\alpha}\phi\!\Bigl(\tfrac{y-x_i}{q_{X_n}}\Bigr) \right|^2 .
\end{align*}
In particular, the final upper bound in both cases is independent of the specific choice of $s$ and $j$. Using the seminorm definition and \eqref{eq:chain13}, we obtain
\begin{align}
    \left|p_f\right|_{W_2^{\lfloor\tau\rfloor+s}(\R^N)}^{2} =& \sum_{|\alpha|=\lfloor\tau\rfloor}\int_{\R^N}\!\int_{\R^N}
    \frac{\bigl|\partial^{\alpha}p_f(x)-\partial^{\alpha}p_f(y)\bigr|^{2}}{\|x-y\|_{2}^{\,N+2s}}\,
    \mathrm{d}x\,\mathrm{d}y, \notag \\
    \leq & 2\,\|f\|_{C_{\mathrm{ex}}(\Omega)}^2\sum_{|\alpha|=\lfloor\tau\rfloor}\sum_{i=1}^n\int_{\R^N}\!\int_{\R^N}
    \frac{\left| \partial^{\alpha}\phi\!\Bigl(\tfrac{x-x_i}{q_{X_n}}\Bigr)
 - \partial^{\alpha}\phi\!\Bigl(\tfrac{y-x_i}{q_{X_n}}\Bigr) \right|^2}{\|x-y\|_{2}^{\,N+2s}}\,
    \mathrm{d}x\,\mathrm{d}y, \notag\\
     =& 2\,\|f\|_{C_{\mathrm{ex}}(\Omega)}^2 q_{X_n}^{-2\lfloor\tau\rfloor}\sum_{|\alpha|=\lfloor\tau\rfloor}\sum_{i=1}^n\int_{\R^N}\!\int_{\R^N}
    \frac{\left| \left(\partial^{\alpha}\phi\right)\!\Bigl(\tfrac{x-x_i}{q_{X_n}}\Bigr)
 - \left(\partial^{\alpha}\phi\right)\!\Bigl(\tfrac{y-x_i}{q_{X_n}}\Bigr) \right|^2}{\|x-y\|_{2}^{\,N+2s}}\,
    \mathrm{d}x\,\mathrm{d}y,\label{eq:chain14} \\
     =& 2\,\|f\|_{C_{\mathrm{ex}}(\Omega)}^2 q_{X_n}^{-2\lfloor\tau\rfloor}\sum_{|\alpha|=\lfloor\tau\rfloor}\sum_{i=1}^n\int_{\R^N}\!\int_{\R^N}
    \frac{\left| \left(\partial^{\alpha}\phi\right)\! (u )
 - \left(\partial^{\alpha}\phi\right)\! (v ) \right|^2}{q_{X_n}^{N+2s}\|u-v\|_{2}^{\,N+2s}}\, q_{X_n}^{2N}
    \mathrm{d}u\,\mathrm{d}v,\label{eq:chain15}\\
    =& 2\,\|f\|_{C_{\mathrm{ex}}(\Omega)}^2 q_{X_n}^{N-2\lfloor\tau\rfloor-2s} n \left|\phi\right|_{W_2^{\lfloor\tau\rfloor+s}(\R^N)}^{2}\notag \\
      =& 2\,\|f\|_{C_{\mathrm{ex}}(\Omega)}^2 q_{X_n}^{N-2\tau} n \left|\phi\right|_{W_2^{\lfloor\tau\rfloor+s}(\R^N)}^{2}.\label{eq:chain16}
\end{align}
In \eqref{eq:chain14} we used the chain rule
and for \eqref{eq:chain15} the substitution $u=\tfrac{x-x_i}{q_{X_n}}$ and $v=\tfrac{y-x_i}{q_{X_n}}$,
so that $\mathrm{d}x\,\mathrm{d}y=q_{X_n}^{2N}\,\mathrm{d}u\,\mathrm{d}v$ and
$\|x-y\|_{2}^{\,N+2s}=q_{X_n}^{N+2s}\,\|u-v\|_{2}^{\,N+2s}$.
Combining \eqref{eq:chain10} with \eqref{eq:chain12} and \eqref{eq:chain16} yields
\begin{align*}
    \|p_f\|_{W_2^{\tau}(\R^N)} \leq \sqrt{2}\,\|f\|_{C_{\mathrm{ex}}(\Omega)} q_{X_n}^{N/2-\tau} \sqrt{n} \|\phi\|_{W_2^{\tau}(\R^N)},
\end{align*}
which, together with \eqref{eq:chain8}, gives
\begin{align*}
    \Bigl\Vert \sum_{i=1}^n f(x_i)\!\Bigl( l_{x_i} - \phi\!\Bigl(\tfrac{\cdot-x_i}{q_{X_n}}\Bigr)\Bigr) \Bigr\Vert_{L^2(\Omega)} \leq & \frac{\sqrt{2}C_{\text{\tiny{SZ}}} C_{\tau,\R^N}}{c_{\tau,\Omega}}\, h_{X_n,\Omega}^{\tau} q_{X_n}^{N/2-  \tau} \sqrt{n}    \|\phi\|_{W_2^{\tau}(\R^N)} \|f\|_{C_{\mathrm{ex}}(\Omega)} \\
    \leq & \frac{\sqrt{2}C_{\text{\tiny{SZ}}} C_{\tau,\R^N}}{c_{\tau,\Omega}}\, h_{X_n,\Omega}^{\tau} q_{X_n}^{N/2-  \tau} C_{\Omega,N,\rho}^{N/2} h_{X_n,\Omega}^{-N/2}    \|\phi\|_{W_2^{\tau}(\R^N)} \|f\|_{C_{\mathrm{ex}}(\Omega)}\\
    \leq &  \frac{\sqrt{2}C_{\text{\tiny{SZ}}} C_{\tau,\R^N}}{c_{\tau,\Omega}}\, \rho^{\tau-N/2} C_{\Omega,N,\rho}^{N/2} \|\phi\|_{W_2^{\tau}(\R^N)}  \|f\|_{C_{\mathrm{ex}}(\Omega)}.
\end{align*}
In the second inequality we used \eqref{eq:fillN}, and in the last inequality the mesh-ratio bound
\(\rho \ge h_{X_n,\Omega}/q_{X_n}\).

\medskip\noindent\textbf{(ii) The “bump’’ term.}  Since the supports of the bumps are disjoint, we have
\begin{align*}
\left\|\sum_{i=1}^n f(x_i)\,\phi(\tfrac{\cdot-x_i}{q_{X_n}})\right\|_{L^2(\Omega)}^2
= & \sum_{i=1}^n |f(x_i)|^2\,\|\phi(\tfrac{\cdot-x_i}{q_{X_n}})\|_{L^2(\Omega)}^2 \\
\le & n\,q_{X_n}^{\,N}\, \|\phi\|_{L^2(\mathbb{R}^N)}^2 \|f\|_{C_{\mathrm{ex}}(\Omega)}^2,\\
 \le & C_{\Omega,N,\rho}^N\,h_{X_n,\Omega}^{-N}\,q_{X_n}^N\,\|f\|_{C_{\mathrm{ex}}(\Omega)}^2 \\
\le & C_{\Omega,N,\rho}^N\,\,\|f\|_{C_{\mathrm{ex}}(\Omega)}^2,
\end{align*}
where we used \eqref{eq:fillN} and $\|\phi\|_{L^2(\mathbb{R}^N)} =1$, and in the last inequality the bound $q_{X_n} \le h_{X_n,\Omega}$.

\medskip\noindent\textbf{(iii) Conclusion.}  Combining (i) and (ii) in \eqref{eq:chain000} yields
\[
\|\Pi^n_{C,L^2}f\|_{L^2(\Omega)}
\;\le\;\left(\frac{\sqrt{2}C_{\text{\tiny{SZ}}} C_{\tau,\R^N}}{c_{\tau,\Omega}}\, \rho^{\tau-N/2} C_{\Omega,N,\rho}^{N/2} \|\phi\|_{W_2^{\tau}(\R^N)}+C_{\Omega,N,\rho}^{N/2}\right)\,\|f\|_{C_{\mathrm{ex}}(\Omega)},
\]
and hence 
\(\|\Pi^n_{C,L^2}\|_{\mathcal L(C_{\mathrm{ex}}(\Omega),L^2(\Omega))}\) is bounded independently of \(n\).
\end{proof}
\noindent Combining the uniform boundedness estimate from Theorem~\ref{th:CLuniOp} with the admissibility statement in Proposition~\ref{pro:adCL} verifies the hypotheses of the abstract “escaping the native space’’ principle, Theorem~\ref{theo:conncentionLebeEscaping}. As a consequence, we obtain the following concrete \(L^{2}\)-convergence result for continuous target functions on $\overline{\Omega}$.

\begin{theorem}
\label{thm:escape_C_L2}
Let \(k\) be a Sobolev kernel of order \(\tau\) with \(\lfloor \tau \rfloor > N/2\), and let \(\Omega \subset \R^{N}\) be a bounded Lipschitz domain satisfying the  interior cone condition for the angle \(\eta \in (0,\pi/2)\) and radius \(R>0\). Let \((X_n)_{n\in\N}\) be a nested, quasi\mbox{-}uniform sequence of finite node sets \(X_n \subset \Omega\) with dense union in \(\Omega\), i.e.,
\[
  X_1 \subset X_2 \subset \cdots, 
  \qquad 
  \overline{\bigcup_{n\in\N} X_n} = \overline{\Omega},
\]
and assume that for some \(\rho \ge 1\) and all \(n\in\N\),
\[
  h_{X_n,\Omega} \le h_1(\tau,\eta)\,R,
  \qquad
  \frac{h_{X_n,\Omega}}{q_{X_n}} \le \rho,
\]
where \(h_1(\tau,\eta)\) is the constant from~\eqref{eq:interiorH}. Then, for every \(f \in C_{\mathrm{ex}}(\Omega)\),
\[
  \lim_{n\to\infty} \bigl\| f - \Pi_{C,L^2}^n f \bigr\|_{L^2(\Omega)} = 0 .
\]
\end{theorem}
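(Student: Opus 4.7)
The plan is to realize this theorem as a direct application of the abstract escape principle Theorem~\ref{theo:conncentionLebeEscaping}, instantiated with $A(\Omega)=C_{\mathrm{ex}}(\Omega)$ and $B(\Omega)=L^{2}(\Omega)$. Concretely, three ingredients must be verified: (i) that $(C_{\mathrm{ex}}(\Omega),L^{2}(\Omega))$ is a $k$-admissible pair; (ii) that $(X_{n})_{n\in\N}$ is nested with dense union in $\overline{\Omega}$; and (iii) that the operator norms $\|\Pi^{n}_{C,L^{2}}\|_{\mathcal{L}(C_{\mathrm{ex}}(\Omega),L^{2}(\Omega))}$ are uniformly bounded in $n$.

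Of these, (i) is exactly the content of Proposition~\ref{pro:adCL}, and (ii) is part of the hypothesis. For (iii) I intend to invoke Theorem~\ref{th:CLuniOp}, which delivers a uniform bound $C_{\Pi_{C,L^{2}}}$ on $\|\Pi^{n}_{C,L^{2}}\|$ under three geometric conditions on the nodes: $h_{X_{n},\Omega}\le h_{1}(\tau,\eta)R$, $h_{X_{n},\Omega}/q_{X_{n}}\le \rho$, and $q_{X_{n}}<1$. The first two are assumed for all $n$ in the hypothesis. The third, however, is not stated here and constitutes the one point that requires a short argument: since $\overline{\bigcup_{n} X_{n}}=\overline{\Omega}$ and $\Omega$ is bounded, the fill distance satisfies $h_{X_{n},\Omega}\to 0$, and the quasi-uniformity bound $q_{X_{n}}\le h_{X_{n},\Omega}$ (which holds by definition of the fill and separation distances) then forces $q_{X_{n}}\to 0$. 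Hence $q_{X_{n}}<1$ for all $n\ge N_{0}$ for some $N_{0}\in\N$, and Theorem~\ref{th:CLuniOp} applies uniformly for such indices.

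It remains to handle the finitely many initial indices $n<N_{0}$. For each such $n$, the interpolation operator is of finite rank, and the general estimate \eqref{eq:operatorNormEstimate} combined with the fact that every Lagrange function $l_{x_{i}}\in\Hk\hookrightarrow C_{\mathrm{ex}}(\Omega)$ lies in $L^{2}(\Omega)$ (since $\Omega$ is bounded) gives $\|\Pi^{n}_{C,L^{2}}\|<\infty$. Taking the maximum of these finitely many quantities together with $C_{\Pi_{C,L^{2}}}$ yields the desired uniform bound over all $n\in\N$, completing step (iii).

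With (i)--(iii) established, Theorem~\ref{theo:conncentionLebeEscaping} yields $\lim_{n\to\infty}\|f-\Pi^{n}_{C,L^{2}}f\|_{L^{2}(\Omega)}=0$ for every $f\in C_{\mathrm{ex}}(\Omega)$, which is exactly the assertion. The entire technical burden has been discharged in the preceding results, so there is no serious obstacle in the present proof beyond the routine bookkeeping just sketched; the only point requiring even a moment's thought is the tacit smallness hypothesis $q_{X_{n}}<1$, which is extracted from density of the union by the elementary fill-vs-separation inequality.
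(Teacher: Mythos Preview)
Your proposal is correct and follows essentially the same route as the paper: invoke Proposition~\ref{pro:adCL} for $k$-admissibility, Theorem~\ref{th:CLuniOp} for the uniform operator-norm bound, and conclude via Theorem~\ref{theo:conncentionLebeEscaping}. You are in fact slightly more careful than the paper, which simply asserts that Theorem~\ref{th:CLuniOp} applies without commenting on the auxiliary hypothesis $q_{X_n}<1$; your observation that density of $\bigcup_n X_n$ in $\overline{\Omega}$ together with nestedness and compactness forces $h_{X_n,\Omega}\to 0$, hence $q_{X_n}\le h_{X_n,\Omega}\to 0$, and that the finitely many remaining indices can be absorbed by the crude finite-rank bound~\eqref{eq:operatorNormEstimate}, is the right way to close this small gap.
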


\begin{proof}
By Proposition~\ref{pro:adCL}, the pair \((C_{\mathrm{ex}}(\Omega),L^{2}(\Omega))\) is \(k\)\mbox{-}admissible under the stated domain properties and sampling assumptions. Moreover, Theorem~\ref{th:CLuniOp} yields a bound on \(\|\Pi^n_{C,L^2}\|_{\mathcal{L}(C_{\mathrm{ex}}(\Omega),L^2(\Omega))}\) that is uniform in \(n\). Hence all hypotheses of Theorem~\ref{theo:conncentionLebeEscaping} are satisfied, and the claim follows.
\end{proof}

\noindent In particular, for Sobolev kernels and quasi\mbox{-}uniform node families with \(h_{X_n,\Omega}\to 0\), kernel interpolation convergences for every \(f\in C_{\mathrm{ex}}(\Omega)\) in the \(L^{2}(\Omega)\)-norm. Via the isometric identification \(C_{\mathrm{ex}}(\Omega)\cong C(\overline{\Omega})\), the same conclusion holds for all \(f\in C(\overline{\Omega})\) in the $L^2(\overline{\Omega})$-norm, since \(\partial\Omega\) has Lebesgue measure zero.\\
Under additional hypotheses (specified in the next section), the convergence can be strengthened to the supremum norm, yielding \(C_{\mathrm{ex}}(\Omega)\)\mbox{-}convergence of the interpolants.

\section{Approximating continuous functions by Sobolev kernel interpolation in the \(C\)-norm}\label{sec5}
\label{sec:Cnorm-approx}

Throughout this section we take
\[
  A(\Omega) \;=\; B(\Omega) \;=\; C_{\mathrm{ex}}(\Omega),
\]
where \(\Omega\subset\mathbb{R}^{N}\) is a bounded Lipschitz domain. We first verify that this choice yields a \(k\)-admissible pair in the sense of Definition~\ref{def:admis-closure} for Sobolev kernels of order \(\tau\) with \(\lfloor \tau \rfloor > \tfrac{N}{2}\).

\begin{proposition}\label{prop:ad_CC}
Let \(\Omega \subset \mathbb{R}^N\) be a bounded Lipschitz domain, and let \(k\) be a Sobolev kernel of order \(\tau\) with \(\lfloor \tau \rfloor>\tfrac{N}{2}\). Then \(\bigl(C_{\mathrm{ex}}(\Omega),C_{\mathrm{ex}}(\Omega)\bigr)\) is a \(k\)\mbox{-}admissible pair.
\end{proposition}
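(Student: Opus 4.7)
The plan is to verify the four conditions (a)--(d) of Definition~\ref{def:admis-closure} for the pair $(C_{\mathrm{ex}}(\Omega),C_{\mathrm{ex}}(\Omega))$. Since the first-component space coincides with the one treated in Proposition~\ref{pro:adCL}, three of the four items can be taken over verbatim; only the target-norm control condition (d) is different, and in the present setting it becomes trivial. I therefore expect no genuine difficulty: the proof will essentially be a short reprise of Proposition~\ref{pro:adCL}, with (d) replaced by the identity embedding.

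Concretely, for (a) the continuous point evaluation on $C_{\mathrm{ex}}(\Omega)$ follows from the very definition of the norm, $|f(x)|\le\|f\|_{C_{\mathrm{ex}}(\Omega)}$ for all $x\in\Omega$, giving $c_A=1$. For (b) I would repeat the chain of embeddings used in the proof of Proposition~\ref{pro:adCL}: the Fourier characterization together with $\tau\ge\lfloor\tau\rfloor$ yields $\Hk\hookrightarrow W_2^{\lfloor\tau\rfloor}(\Omega)$ via the norm equivalences from \eqref{eq:NormEquiLocal}, and then the Sobolev embedding on a Lipschitz domain with $\lfloor\tau\rfloor>N/2$ provides a bounded extension/embedding operator $W_2^{\lfloor\tau\rfloor}(\Omega)\to C(\overline{\Omega})$, which by composition with the isometric restriction $R:C(\overline{\Omega})\to C_{\mathrm{ex}}(\Omega)$ delivers a constant $C_A>0$ with $\|f\|_{C_{\mathrm{ex}}(\Omega)}\le C_A\|f\|_{\Hk}$.

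For (c) the same Stone--Weierstrass argument applies: polynomials are uniformly dense in $C(\overline{\Omega})$, and any polynomial restricted to $\Omega$ lies in $W_2^{\lceil\tau\rceil}(\Omega)\subset\Hk$; passing through the isomorphism $C(\overline{\Omega})\cong C_{\mathrm{ex}}(\Omega)$ yields $\overline{\Hk}^{\|\cdot\|_{C_{\mathrm{ex}}(\Omega)}}=C_{\mathrm{ex}}(\Omega)$.

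Finally, (d) is immediate: since $A(\Omega)=B(\Omega)=C_{\mathrm{ex}}(\Omega)$, the identity map is an isometry and one may take $C_B=1$. The main ``obstacle'', if any, is purely notational -- ensuring that the extension, Sobolev embedding, and restriction are composed consistently so that the resulting constant $C_A$ depends only on $\tau$, $N$, and the geometry of $\Omega$ -- but no new analytical ingredient beyond those already developed for Proposition~\ref{pro:adCL} is required.
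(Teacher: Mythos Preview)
Your proposal is correct and follows exactly the paper's approach: the paper also notes that items (a)--(c) are identical to Proposition~\ref{pro:adCL} since the first component space is unchanged, and that (d) holds trivially with $C_B=1$ because $A(\Omega)=B(\Omega)$ and the embedding is the identity.
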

\begin{proof}
Items {(a)}-{(c)} of Definition \ref{def:admis-closure} are exactly as in Proposition~\ref{pro:adCL}, since 
\(A(\Omega)=C_{\mathrm{ex}}(\Omega)\) for both settings. For {(d)},
because \(A(\Omega)=B(\Omega)=C_{\mathrm{ex}}(\Omega)\), the
embedding \(A(\Omega)\hookrightarrow B(\Omega)\) is the
identity and holds with constant \(C_B=1\). Hence \((C_{\mathrm{ex}}(\Omega),
C_{\mathrm{ex}}(\Omega))\) is \(k\)-admissible.
\end{proof}
\noindent In the present setting,  the operator norm of the interpolation operator \eqref{eq:Pi_C_B} coincides with the classical Lebesgue constant, which we now introduce.
\begin{definition}[Lebesgue function and constant]\label{def:LebesgueConstant}
Let \(X_n=\{x_1,\dots,x_n\}\subset\Omega\) be pairwise distinct, \(k\) be s.p.d., and \(\{l_{x_i}\}_{i=1}^n\subset V(X_n)\) the Lagrange basis. The Lebesgue function and constant are
\[
  \Lambda_{X_n}(x) \;:=\; \sum_{i=1}^n \bigl|l_{x_i}(x)\bigr|, \quad x\in\Omega,
  \qquad
  \Lambda_{X_n} \;:=\; \bigl\|\Lambda_{X_n}\!(\,\cdot\,)\bigr\|_{C_{\mathrm{ex}}(\Omega)}.
\]
\end{definition}
\noindent With Definition~\ref{def:LebesgueConstant} in place, the following classical result holds (see \cite[p.~41]{powell1981}).
\begin{lemma}\label{lem:Lebesgue-operator}
For every finite \(X_n\subset\Omega\),
\[
  \bigl\|\Pi^n_{C,C}\bigr\|_{\mathcal{L}\left(C_{\mathrm{ex}}(\Omega)\right)} \;=\; \Lambda_{X_n}.
\]
\end{lemma}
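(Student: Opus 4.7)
The plan is to prove the identity by establishing two matching inequalities. For the upper bound I would estimate pointwise and then take suprema; for the lower bound I would exhibit a specific extremal function in $C_{\mathrm{ex}}(\Omega)$ that nearly realizes the value $\Lambda_{X_n}$.

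For the upper bound, fix $f \in C_{\mathrm{ex}}(\Omega)$ and $x \in \Omega$. Using the Lagrange representation from \eqref{eq:lagrange-rep} together with the triangle inequality, I would write
\[
 \bigl|\Pi^n_{C,C} f(x)\bigr| \;=\; \Bigl|\sum_{i=1}^n f(x_i)\,l_{x_i}(x)\Bigr| \;\le\; \max_{i=1,\dots,n}|f(x_i)| \;\sum_{i=1}^n |l_{x_i}(x)| \;\le\; \|f\|_{C_{\mathrm{ex}}(\Omega)}\, \Lambda_{X_n}(x).
\]
Taking the supremum over $x \in \Omega$ yields $\|\Pi^n_{C,C} f\|_{C_{\mathrm{ex}}(\Omega)} \le \Lambda_{X_n}\,\|f\|_{C_{\mathrm{ex}}(\Omega)}$, hence $\|\Pi^n_{C,C}\|_{\mathcal{L}(C_{\mathrm{ex}}(\Omega))} \le \Lambda_{X_n}$.

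For the matching lower bound, I would first argue that the supremum defining $\Lambda_{X_n}$ is attained. Since $\Hk \hookrightarrow C_{\mathrm{ex}}(\Omega)$ (established in the proof of Proposition~\ref{pro:adCL}), each Lagrange function $l_{x_i} \in V(X_n) \subset \Hk$ admits a continuous extension to $\overline{\Omega}$; hence $\Lambda_{X_n}(\cdot)$ extends continuously to $\overline{\Omega}$ and attains its maximum at some $x^\star \in \overline{\Omega}$. Setting $\sigma_i := \operatorname{sign}\bigl(\tilde l_{x_i}(x^\star)\bigr)\in\{-1,0,1\}$, where $\tilde l_{x_i}$ denotes the continuous extension, I would invoke the Tietze extension theorem on the compact Hausdorff space $\overline{\Omega}$ to produce $\tilde f \in C(\overline{\Omega})$ with $\tilde f(x_i)=\sigma_i$ for all $i$ and $\|\tilde f\|_{C(\overline{\Omega})} \le 1$. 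Its restriction $f := \tilde f|_{\Omega} \in C_{\mathrm{ex}}(\Omega)$ satisfies $\|f\|_{C_{\mathrm{ex}}(\Omega)}\le 1$ and
\[
 \bigl(\Pi^n_{C,C} f\bigr)(x^\star) \;=\; \sum_{i=1}^n \sigma_i\, \tilde l_{x_i}(x^\star) \;=\; \sum_{i=1}^n |\tilde l_{x_i}(x^\star)| \;=\; \Lambda_{X_n},
\]
so $\|\Pi^n_{C,C} f\|_{C_{\mathrm{ex}}(\Omega)} \ge \Lambda_{X_n}$. Combining the two inequalities gives the claim.

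The only subtle point — and where I would take care — is the interplay between $C_{\mathrm{ex}}(\Omega)$ and $C(\overline{\Omega})$: the supremum in $\Lambda_{X_n}$ is a priori only a supremum over $\Omega$, and Tietze's theorem requires a compact space. Both issues are resolved by the isometric identification $C_{\mathrm{ex}}(\Omega)\cong C(\overline{\Omega})$ noted at the start of Section~\ref{sec4}, together with the continuity of each Lagrange function up to the boundary, which was established inside the proof of Proposition~\ref{pro:adCL}. The argument is otherwise the standard duality/extremal-function computation familiar from the polynomial Lebesgue constant setting.
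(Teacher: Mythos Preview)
Your proof is correct and follows essentially the same two-inequality approach as the paper: both establish the upper bound via the triangle inequality and the lower bound by choosing a sign-matching function at a maximizing point $x^\star$. You are slightly more explicit than the paper in justifying the attainment of the supremum and in invoking Tietze's theorem to construct the extremal $f$, but the underlying argument is identical.
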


\begin{proof}
For the upper bound, for any \(f\in C_{\mathrm{ex}}(\Omega)\),
\[
  \bigl\|\Pi^n_{C,C} f\bigr\|_{C_{\mathrm{ex}}(\Omega)}
  \;=\; \Bigl\|\sum_{i=1}^n f(x_i)\,l_{x_i}\Bigr\|_{C_{\mathrm{ex}}(\Omega)}
  \;\le\; \|f\|_{C_{\mathrm{ex}}(\Omega)}\,\bigl\|\Lambda_{X_n}\!(\,\cdot\,)\bigr\|_{C_{\mathrm{ex}}(\Omega)}
  \;=\; \Lambda_{X_n}\,\|f\|_{C_{\mathrm{ex}}(\Omega)}.
\]
Hence \(\|\Pi^n_{C,C}\|\le \Lambda_{X_n}\).
For the reverse inequality, choose \(x^\ast\in \overline \Omega\) with
\(
  \Lambda_{X_n}(x^\ast)=\Lambda_{X_n}
\)
and define data on the nodes by \(d_i:=\mathrm{sign}\bigl(l_{x_i}(x^\ast)\bigr)\in\{-1,0,1\}\). Then, there exists \(\tilde f\in C(\overline \Omega)\) with \(\|\tilde f|_\Omega \|_{C_{\mathrm{ex}}(\Omega)}= \|\tilde f\|_{ C(\overline \Omega)}\le 1\) and \(\tilde f(x_i)=d_i\) for all \(i=1,...,n\). Thus, 
\[
  \bigl|\Pi^n_{C,C}\tilde f(x^\ast)\bigr|
  \;=\; \Bigl|\sum_{i=1}^n \tilde f(x_i)\,l_{x_i}(x^\ast)\Bigr|
  \;=\; \sum_{i=1}^n \bigl|l_{x_i}(x^\ast)\bigr|
  \;=\; \Lambda_{X_n}.
\]
Therefore, \(\|\Pi^n_{C,C}\|\ge \Lambda_{X_n}\), and the claim follows.
\end{proof}
\noindent 
Consequently, by Theorem~\ref{theo:conncentionLebeEscaping}, “escaping the native space’’ in the supremum norm is ensured once a uniform upper bound for the Lebesgue constants \(\{\Lambda_{X_n}\}_{n\in\N}\) is available. For Sobolev-type kernels and quasi\mbox{-}uniform centers, De~Marchi {et~al.}\,\cite{marchi2010} derive explicit upper bounds  for \(\Lambda_{X_n}\) that grow proportionally \(n^{1/2}\), which does not yield uniform boundedness. On compact manifolds without boundary, Hangelbroek, Narcowich, and Ward~\cite{HNW10} obtain uniform boundedness under quasi\mbox{-}uniform sampling; the boundaryless setting is essential in their argument. In \cite{HNRW17} a domain-extension strategy with centers outside the domain $\Omega$ providing bounded Lebesgue constants on compact general sets is developed. Note that here the Lagrange functions are not the same as when taking just centers inside the domain. \\
\noindent In the sequel we present a direct argument showing that, when \(\Omega=(a,b)\subset\mathbb{R}\) is an interval, the Lebesgue constants \(\{\Lambda_{X_n}\}_{n\in\N}\) associated with quasi-uniform node sets are uniformly bounded for Sobolev kernels of order \(\tau\) with \(\lfloor \tau \rfloor>\tfrac{N}{2}\) satisfying the local norm decomposition from Definition~\ref{def:SObolevWIthLocalNOrm}. Similar results have also been established for other classes of kernels that are not of Sobolev type; see, for instance, \cite{Santin2023,Bos2008}. 
There, the Lagrange functions \(l_{x_i}\), \(i=1,\dots,n\), have compact support contained in the intervals \([x_{i-1},x_{i+1}]\), where the centers satisfy
$
  x_0:=a < x_1 < \cdots < x_n < b :=x_{n+1}.
$
As a consequence, at every point of \([a,b]\) at most two Lagrange functions are simultaneously nonzero. This strong locality property considerably simplifies the analysis of the associated Lebesgue function and, hence, of the Lebesgue constants.

\subsection{Bounding the Lebesgue constant of Sobolev kernels with local norm decomposition on intervals}
\label{subsec:Lebesgue-interval}
An interval \(\Omega=(a,b)\subset\R\) with \(-\infty<a<b<\infty\) is a bounded Lipschitz domain and satisfies the interior cone condition (Definition~\ref{def:interior}) for any angle \(\eta\in(0,\tfrac{\pi}{2})\) and radius \(R=\tfrac{b-a}{2}\).
Hence the hypotheses of Theorem~\ref{theo:EstSemiNormSobolev} hold whenever
\(h_{X_n,\Omega}\le h_1(\tau,\eta)\,R\) with \(h_1\) from \eqref{eq:interiorH}.
In particular, taking the limit from the left \(\eta \to \pi/2^-\) yields the explicit expression
\begin{equation*}
  \lim_{\eta\to\pi/2^-} h_1(\tau,\eta)
  \;=\; \frac{3\sqrt{7}}{16\tau^2(32+3\sqrt{7})}\,.
\end{equation*}
Moreover, since
\begin{equation*}
  \frac{3\sqrt{7}}{16(32+3\sqrt{7})} \;>\; \frac{1}{100},
\end{equation*}
a convenient sufficient condition for Theorem~\ref{theo:EstSemiNormSobolev} to hold is
\begin{equation}\label{eq:interval-small-h}
  h_{X_n,\Omega}
  \;\le\; \frac{b-a}{100\,\tau^2}.
\end{equation}
Furthermore, on an interval with $X_n:= \{x_1,...,x_n\}$ is ordered increasingly, that is $a<x_1< \cdots < x_n < b$, the fill distance satisfies
\begin{equation}\label{eq:simpFill}
  h_{X_n,\Omega}
  \;=\;
  \max\!\left\{\,x_1-a,\ \max_{1\le i\le n-1}\frac{x_{i+1}-x_i}{2},\ b-x_n\right\}.
\end{equation}
In preparation for the proof establishing an exponential decay of the Lagrange functions, we first introduce two auxiliary constructions, which we briefly review and discuss before stating the theorem. These standard tools are:
\begin{enumerate}
\item \emph{Uniform partitioning.}
For \(M\in\N\), set
\begin{equation}\label{eq:hUMCondition0}
  \xi_j \;:=\; a + j \, \Delta\xi,\quad j=0,1,\dots,M+1,
  \qquad
  \Delta\xi \;:=\; \frac{b-a}{M+1},
\end{equation}
and define subintervals \(\Omega_j:=(\xi_j,\xi_{j+1})\), \(j=0,\dots,M\).
Assume now that
\begin{equation}\label{eq:hUMCondition}
  h_{X_n,\Omega}\ \le\ \frac{b-a}{1200\,\tau^2}.
\end{equation}
Then, for the choice
\begin{equation} \label{eq:hUMCondition5}
  M := \Bigl\lfloor \frac{b-a}{600 \tau^2 \,h_{X_n,\Omega}}\Bigr\rfloor - 1,  
\end{equation}
we indeed have \(M \ge 1\), and for $\tau>\tfrac12$, it holds
\[
  M \ \le\ \frac{b-a}{600 \tau^2 \,h_{X_n,\Omega}} - 1
  \quad\Longrightarrow\quad
  \Delta \xi \ \ge\ 600 \tau^2\, h_{X_n,\Omega}\ \ge\ 150\, h_{X_n,\Omega}.
\]
Since the maximum gap between consecutive nodes is at most $2h_{X_n,\Omega}$ (cf.~\eqref{eq:simpFill}), each $\Omega_j$ contains at least
$\#(\Omega_j \cap X_n) = \lfloor \frac{\Delta \xi }{2\,h_{X_n,\Omega}} \rfloor - 1 \geq 74$ nodes. 
By the same reasoning, for every subinterval $I\subset(a,b)$ that contains at least one node we have
\begin{equation}\label{eq:fillform2}
  h_{X_n\cap I, I}\ \le\ 2\,h_{X_n,\Omega}.
\end{equation}
Applying this with $I=\Omega_j$ and using $\Delta \xi \ge 600 \tau^2 h_{X_n,\Omega}$ gives
\begin{equation}\label{eq:hUMCondition2}
  h_{X_n\cap\Omega_j, \Omega_j}
  \ \le\ 2\,h_{X_n,\Omega}
  \ \le\ \frac{\Delta \xi}{300\,\tau^2},
  \qquad j=0,\dots,M.
\end{equation}
Moreover, it follows directly from \eqref{eq:simpFill} by a simple case distinction
(consider separately the case where the removed node is adjacent to a boundary
and the case where it is not) that, on intervals, deleting a single center enlarges the local fill distance by at most a factor \(3\). Hence
\begin{align}\label{eq:hUMCondition3}
  h_{\left(X_n \setminus \{x_i\}\right)\cap\Omega_j,\ \Omega_j}
  \ \le\ 3\, h_{X_n\cap\Omega_j,\ \Omega_j}\ \le\ 6\,h_{X_n,\Omega}
  \ \le\ \frac{\Delta \xi}{100\,\tau^2},
  \qquad j=0,\dots,M.
\end{align}
In particular, the assumptions of Theorem~\ref{theo:EstSemiNormSobolev} hold on each $\Omega_j$ both for the restricted set $X_n\cap\Omega_j$ and for $\left(X_n\setminus\{x_i\}\right)\cap\Omega_j$, because the local fill distances \eqref{eq:hUMCondition2}–\eqref{eq:hUMCondition3} satisfy the sufficient smallness condition \eqref{eq:interval-small-h}. These estimates will be used below to prove the exponential decay of the associated Lagrange functions.

\item \emph{Smooth cutoff functions.}
Let
\begin{equation}\label{eq:cutOff}
  \mu(t)=
  \begin{cases}
  e^{-1/t}, & t>0,\\[2pt]
  0, & t\le 0,
  \end{cases}
  \qquad
  \chi(t)=\dfrac{\mu(1-t)}{\mu(t)+\mu(1-t)}\,,
  \qquad
  \psi_{j}(x)=\chi\!\left(\dfrac{x-\xi_j}{\,\xi_{j+1}-\xi_j\,}\right),
\end{equation}
for \(j=0,\dots,M\).
Then \(\psi_j\in C^\infty(\R)\) and
\[
  \psi_j(x)=1 \ \text{for } x\le \xi_j,\qquad
  \psi_j(x)=0 \ \text{for } x\ge \xi_{j+1},\qquad
  0<\psi_j(x)<1 \ \text{for } x\in(\xi_j,\xi_{j+1}).
\]
Moreover, for each \(m\in\N_0\) there exists \(C_{\chi,m}>0\), independent of \(j\) and \(\Delta\xi\), such that
\begin{equation}\label{eq:psi-deriv-bnd}
  \|\psi_j^{(m)}\|_{C(\R)}
  \ \le\ \frac{C_{\chi,m}}{(\Delta\xi)^{m}}\,,\qquad m=0,1,2,\dots
\end{equation}
\end{enumerate}

\medskip
\noindent
For Sobolev kernels that admit the local norm decomposition  from Definition~\ref{def:SObolevWIthLocalNOrm},
these ingredients yield the following exponential decay of Lagrange functions; the proof adapts the approach of \cite{HNW10}.

\begin{theorem}\label{thm:exp-decay-interval}
Let \(k\) be a Sobolev kernel of integer order \(\tau\in\N\) that admits a local norm decomposition (Definition~\ref{def:SObolevWIthLocalNOrm}). Let \(X_n=\{x_1<\dots<x_n\}\subset(a,b)\) be pairwise distinct and satisfy the fill-distance condition \eqref{eq:hUMCondition}. Suppose further that 
\begin{equation}\label{eq:rho-qu}
 q_{X_n}<1 \quad\text{and}\quad h_{X_n,\Omega}\ \le\ \rho\, q_{X_n}\quad\text{for some }\quad\rho\ge 1.
\end{equation}
Then there exist constants \(\nu>0\) and \(C_{\text{\tiny{ED}}}>0\), depending only on \(k\) (especially on \(\tau\) and the local norm decomposition constants), the interval $(a,b)$ and on \(\rho\),  such that for every Lagrange function \(l_{x_i}\) and all \(x\in (a,b)\),
\begin{equation}\label{eq:exp-decay-statement}
    |l_{x_i}(x)| \ \le\ C_{\text{\tiny{ED}}}\,\exp\!\Bigl(-\,\nu\,\frac{|x-x_i|}{h_{X_n,\Omega}}\Bigr).
\end{equation}
\end{theorem}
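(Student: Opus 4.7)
The plan is to combine a uniform $L^{\infty}$ bound on every Lagrange function with a cutoff--minimality recursion adapted from \cite{HNW10} but built on the local norm decomposition of Definition~\ref{def:SObolevWIthLocalNOrm} to handle the presence of a boundary. Let $\tilde l_{x_i}\in\HkR$ denote the minimum-norm extension of $l_{x_i}$, extend the partition by $\Omega_{-1}:=(-\infty,a)$ and $\Omega_{M+1}:=(b,\infty)$, set $b_j:=\normW{\tilde l_{x_i}}{\mathcal{G}_k(\Omega_j)}^2$, and let $j_0$ be the index with $x_i\in\Omega_{j_0}$. The uniform bound will handle the near region $|x-x_i|\lesssim \Delta\xi$, while the recursion will force $(b_s)$ to decay geometrically in $|s-j_0|$. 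The scattered-zeros estimate~\eqref{eq:resTheoScatterZero} then converts energy decay into $L^{\infty}$ decay on each $\Omega_s$, and because $\Delta\xi$ is proportional to $\tau^{2} h_{X_n,\Omega}$ by~\eqref{eq:hUMCondition5}, the rate $q^{(s-j_0)/2}$ translates to the claimed $\exp(-\nu\,|x-x_i|/h_{X_n,\Omega})$.

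For the uniform $L^{\infty}$ bound, pick a bump $\phi\in C_c^{\infty}(\R)$ with $\phi(0)=1$ and $\supp\phi\subset(-1,1)$, and define $\phi_i(x):=\phi((x-x_i)/q_{X_n})$. Since $|x_k-x_i|\ge 2q_{X_n}$ for $k\ne i$, $\phi_i$ interpolates the Kronecker data at $X_n$. Minimality of $l_{x_i}$ together with the scaling of Sobolev norms yields $\normW{l_{x_i}}{\Hk}\le\normW{\phi_i}{\HkR}\le C\,q_{X_n}^{1/2-\tau}\le C\,\rho^{\tau-1/2}\,h_{X_n,\Omega}^{1/2-\tau}$. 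Applying the scattered-zeros estimate \eqref{eq:resTheoScatterZero} to $l_{x_i}-\phi_i$ (which vanishes on $X_n$) on $(a,b)$ and using $h_{X_n,\Omega}/q_{X_n}\le\rho$ gives $\normW{l_{x_i}-\phi_i}{L^{\infty}((a,b))}\le C\,\rho^{\tau-1/2}$, so $\normW{l_{x_i}}{L^{\infty}((a,b))}\le C_{0}$ with $C_0$ depending only on $k$, $(a,b)$, and $\rho$.

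For the recursion, fix $s\ge j_0+2$ and consider $v_s:=\psi_{s-1}\,\tilde l_{x_i}\in\HkR$ with $\psi_{s-1}$ from \eqref{eq:cutOff}. Because $\psi_{s-1}(x_i)=1$ (as $x_i<\xi_{s-1}$) and $\tilde l_{x_i}(x_k)=0$ for $k\ne i$, $v_s$ interpolates the same data as $\tilde l_{x_i}$. Minimality $\normW{\tilde l_{x_i}}{\HkR}^2\le\normW{v_s}{\HkR}^2$ combined with the local norm decomposition on $\{\Omega_j\}_{j=-1}^{M+1}$ (noting $\psi_{s-1}\equiv 1$ on $\Omega_j$ for $j\le s-2$ and $\psi_{s-1}\equiv 0$ on $\Omega_j$ for $j\ge s$) yields after cancellation
\begin{align*}
\sum_{j=s-1}^{M+1} b_j \;\le\; \normW{v_s}{\mathcal{G}_k(\Omega_{s-1})}^2 .
\end{align*}
A Leibniz expansion of $(v_s)^{(m)}$ for $0\le m\le\tau$, the derivative bounds~\eqref{eq:psi-deriv-bnd}, the uniform norm equivalence from Definition~\ref{def:SObolevWIthLocalNOrm}, and the scattered-zeros estimate applied to $l_{x_i}$ on $\Omega_{s-1}$ (which is valid by~\eqref{eq:hUMCondition2}, as $l_{x_i}$ vanishes at the $\ge 74$ nodes of $X_n\cap\Omega_{s-1}$) together bound the right-hand side by $C'\,b_{s-1}$; the constant $C'$ stays finite because the ratio $h_{X_n\cap\Omega_{s-1},\Omega_{s-1}}/\Delta\xi\le 1/(300\tau^{2})$ collapses the Leibniz sum into a convergent geometric series in the Sobolev index. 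Setting $S_s:=\sum_{j\ge s} b_j$, this reads $C'S_s\le(C'-1)S_{s-1}$, and iteration delivers $S_s\le q^{s-j_0-1}\normW{\tilde l_{x_i}}{\HkR}^2$ with $q:=(C'-1)/C'<1$. A symmetric argument using the reflected cutoff $1-\psi_j$ handles $s\le j_0-2$.

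Applying \eqref{eq:resTheoScatterZero} on $\Omega_s$ for $s\ge j_0+2$ gives
\begin{align*}
\normW{l_{x_i}}{L^{\infty}(\Omega_s)}
\;\le\; C\, h_{X_n,\Omega}^{\,\tau-1/2}\, b_s^{1/2}
\;\le\; C\, h_{X_n,\Omega}^{\,\tau-1/2}\, q^{(s-j_0-1)/2}\, \normW{\tilde l_{x_i}}{\HkR},
\end{align*}
and the bound $\normW{\tilde l_{x_i}}{\HkR}\le C\rho^{\tau-1/2}\,h_{X_n,\Omega}^{1/2-\tau}$ from the uniform-bound step cancels the $h_{X_n,\Omega}$-powers, leaving $\normW{l_{x_i}}{L^{\infty}(\Omega_s)}\le C\,\rho^{\tau-1/2}\,q^{(s-j_0-1)/2}$. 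For $x\in\Omega_s$ one has $|x-x_i|\le(s-j_0+1)\Delta\xi$, hence $s-j_0-1\ge|x-x_i|/\Delta\xi-2$, and since $\Delta\xi\le 600\tau^{2}\,h_{X_n,\Omega}$ this yields the desired bound $\exp(-\nu\,|x-x_i|/h_{X_n,\Omega})$ with $\nu$ depending only on $q$ and $\tau$; the near region $|x-x_i|\le 2\Delta\xi$ is covered by $|l_{x_i}(x)|\le C_0$ after enlarging $C_{\text{\tiny ED}}$. The main obstacle is the Leibniz estimate in the recursion: the cutoff derivatives scale as $(\Delta\xi)^{-l}$ and would blow up in isolation, and only the local scattered-zeros bound on $\Omega_{s-1}$ (converting each low-order $L^{2}$-derivative norm of $\tilde l_{x_i}$ into $h_{X_n\cap\Omega_{s-1},\Omega_{s-1}}^{\tau-(m-l)}\,|\tilde l_{x_i}|_{W_2^{\tau}(\Omega_{s-1})}$), combined with the smallness of $h_{X_n,\Omega}/\Delta\xi$ from~\eqref{eq:hUMCondition2}, rescues the sum and delivers the uniform constant $C'$ that drives the geometric factor $q<1$.
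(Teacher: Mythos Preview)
Your argument is correct and follows essentially the same route as the paper: the cutoff--minimality trick with the local norm decomposition yields a geometric contraction of tail energies, which the scattered-zeros inequality converts into pointwise decay, and the bump comparison controls the near region and the global $\HkR$-norm of $\tilde l_{x_i}$. The only slip is the bound $\Delta\xi\le 600\tau^{2}h_{X_n,\Omega}$, which should read $\Delta\xi\le 1200\tau^{2}h_{X_n,\Omega}$ (from $M+1=\lfloor (b-a)/(600\tau^{2}h_{X_n,\Omega})\rfloor\ge\tfrac12(b-a)/(600\tau^{2}h_{X_n,\Omega})$); this only changes the constant $\nu$ and does not affect the argument.
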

\begin{proof}
Let \(M\in\N\) be chosen by \eqref{eq:hUMCondition5} and \(\{\xi_j\}_{j=0}^{M+1}\) as in \eqref{eq:hUMCondition0}.
Fix \(i\in\{1,\dots,n\}\) and set
\[
  J_i := \max\{\, j\in\{0,\dots,M\} : \xi_j \le x_i \,\}\quad\Longrightarrow\quad x_i\in[\xi_{J_i},\xi_{J_i+1}).
\]

\medskip\noindent
\textbf{Step 1: One-step tail contraction.}
Assume \(J_i<M\) and fix \(j\in\{J_i+1,\dots,M\}\) so that \(x_i<\xi_j\).
Using  the smooth cutoff from \(\psi_{j}\) from
\eqref{eq:cutOff}, we have for every node \(x_\ell\in X_n\):
\[
(\psi_{j} l_{x_i})(x_\ell)=
\begin{cases}
\psi_{j}(x_\ell) \cdot l_{x_i}(x_\ell)=1 \cdot \delta_{i\ell} = \delta_{i\ell}, & x_\ell\le \xi_{j},\\
\psi_{j}(x_\ell) \cdot l_{x_i}(x_\ell)=\psi_{j}(x_\ell) \cdot 0 = 0 =  \delta_{i\ell}, & x_\ell> \xi_{j}
\end{cases}
\]
Thus \(\psi_j l_{x_i}\) interpolates the same data as \(l_{x_i}\).
Further, by the Leibniz rule and \eqref{eq:psi-deriv-bnd}, for any
\(m\in\{0,\dots,\tau\}\),
$$\Vert (\psi_{j} l_{x_i})^{(m)} \Vert_{L^2(\R)} \leq \max_{r=0,...,m}\binom{m}{r}\frac{C_{\chi,r}}{(\Delta\xi)^{r}} \Vert  l_{x_i}  \Vert_{W_2^{\tau}(\R)}<\infty. $$
 Consequently, $\psi_{j} l_{x_i} \in W_2^{\tau}(\R)\simeq\mathcal{H}_k(\R)$.
By the native-space minimal-norm property of kernel interpolation, it follows 
\(\|l_{x_i}\|_{\mathcal{H}_k(\R)}\le \|\psi_{j} l_{x_i}\|_{\mathcal{H}_k(\R)}\).
Using the additive local norm decomposition
(Definition~\ref{def:SObolevWIthLocalNOrm}) with the partitions
\begin{align*}
\R= \overline{ (-\infty,\xi_{j})\cup (\xi_{j}, \infty)} \quad\text{and}\quad
\R= \overline{ (-\infty,\xi_{j})\cup\Omega_{j}\cup(\xi_{j+1},\infty)}
\end{align*}
then yields
\begin{align*}
  &\|l_{x_i}\|_{\mathcal{G}_k\left((-\infty,\xi_{j})\right)}^2
   +\|l_{x_i}\|_{\mathcal{G}_k\left((\xi_{j},\infty)\right)}^2
    \le 
   \underbrace{\|\psi_{j} l_{x_i}\|_{\mathcal{G}_k\left((-\infty,\xi_{j})\right)}^2}_{=\|l_{x_i}\|_{\mathcal{G}_k\left((-\infty,\xi_{j})\right)}^2}
   +\|\psi_{j} l_{x_i}\|_{\mathcal{G}_k\left(\Omega_{j}\right)}^2
   +\underbrace{
     \|\psi_{j} l_{x_i}\|_{\mathcal{G}_k\left((\xi_{j+1},\infty)\right)}^2}_{=0}
\end{align*}
by the  properties of the cutoff function \(\psi_{j}\). Canceling the common term gives
\[
  \|l_{x_i}\|_{\mathcal{G}_k\left((\xi_{j},\infty)\right)}^2
  \ \le\
  \|\psi_{j} l_{x_i}\|_{\mathcal{G}_k\left(\Omega_{j}\right)}^2.
\]
Invoking the uniform norm equivalence from Definition~\ref{def:SObolevWIthLocalNOrm} then results in
\begin{equation}\label{eq:loc-split}
  \|l_{x_i}\|_{W_2^{\tau}\left((\xi_{j},\infty)\right)}^2
  \ \le\ \Bigl(\frac{C_{\mathcal{G}_k}}{c_{\mathcal{G}_k}}\Bigr)^{\!2}\;
          \|\psi_{j} l_{x_i}\|_{W_2^{\tau}\left(\Omega_{j}\right)}^2.
\end{equation}
In one dimension, the Leibniz rule together with \eqref{eq:psi-deriv-bnd} gives
\begin{equation}\label{eq:prod}
  \|\psi_{j}\,l_{x_i}\|_{W_2^{\tau}\left(\Omega_{j}\right)}
  \;\le\; C_{1}\sum_{m=0}^{\tau} (\Delta\xi)^{-(\tau-m)}\,|l_{x_i}|_{W_2^{m}\left(\Omega_{j}\right)},
\end{equation}
with \(C_{1}=\tau \max_{0\le m\le\tau}\binom{\tau}{m}C_{\chi,m}\).
Since \(x_i \notin \Omega_{j}\), the cardinal property
\(l_{x_i}(\tilde{x})=0\)  for all \(\tilde{x} \in X_n\cap\Omega_{j}\) holds and \eqref{eq:hUMCondition2} is satisfied under the sampling condition \eqref{eq:hUMCondition}. Therefore, Theorem~\ref{theo:EstSemiNormSobolev} applies for \(m=0,1,\dots,\tau-1\)
and yields
\[
  |l_{x_i}|_{W_2^{m}\left(\Omega_{j}\right)}
  \;\le\; C_{\text{\tiny{SZ}}}\,(\Delta\xi)^{\,\tau-m}\,
          |l_{x_i}|_{W_2^{\tau}\left(\Omega_{j}\right)},
\]
with $C_{\text{\tiny{SZ}}}$ depending only on $\tau$.
Substituting this into \eqref{eq:prod} and squaring gives
\begin{equation}\label{eq:cell-to-tau}
  \|\psi_{j}\,l_{x_i}\|_{W_2^{\tau}\left(\Omega_{j}\right)}^2
  \;\le\; C_2\;|l_{x_i}|_{W_2^{\tau}\left(\Omega_{j}\right)}^2
  \quad\text{with}\quad C_2:=C_{1}^2\,C_{\text{\tiny{SZ}}}^2\,(\tau+1)^2.
\end{equation}
Combining \eqref{eq:loc-split} and \eqref{eq:cell-to-tau} yields
\begin{equation}\label{eq:0001}
  \|l_{x_i}\|_{W_2^{\tau}\left((\xi_{j},\infty)\right)}^2
  \ \le\ C_3\;|l_{x_i}|_{W_2^{\tau}\left(\Omega_{j}\right)}^2
  \ \le\ C_3\;\|l_{x_i}\|_{W_2^{\tau}\left(\Omega_{j}\right)}^2
  \quad\text{with}\quad C_3:=\max \left \{ 2, \Bigl(\frac{C_{\mathcal{G}_k}}{c_{\mathcal{G}_k}}\Bigr)^{\!2}\,C_2 \right \}.
\end{equation}
In addition, we have
\begin{align*}
      \|l_{x_i}\|_{W_2^{\tau}\left((\xi_{j},\infty)\right)}^2
   = 
     \|l_{x_i}\|_{W_2^{\tau}\left(\Omega_{j}\right)}^2 +\|l_{x_i}\|_{W_2^{\tau}\left((\xi_{j+1},\infty)\right)}^2  
      \geq \frac{1}{C_3}  \|l_{x_i}\|_{W_2^{\tau}\left((\xi_{j},\infty)\right)}^2 +\|l_{x_i}\|_{W_2^{\tau}\left((\xi_{j+1},\infty)\right)}^2,
\end{align*}
where we used \eqref{eq:0001} for the later inequality. Therefore, we obtain
\begin{align}\label{eq:rhCon}
  \|l_{x_i}\|_{W_2^{\tau}\left((\xi_{j+1},\infty)\right)}^2
  \ \le\ \mu^2\;\|l_{x_i}\|_{W_2^{\tau}\left((\xi_{j},\infty)\right)}^2
 \quad\text{with}\quad
  \mu^2:=\frac{C_3-1}{C_3}\in(0,1).
\end{align}
An entirely analogous argument with \(\tilde\psi_j:=1-\psi_j\) gives the left-tail contraction
\begin{equation}\label{eq:left-tail-contraction}
   \|l_{x_i}\|_{W^{\tau}_2((-\infty,\,\xi_{j-1}))}
   \ \le\  \mu^2\;\|l_{x_i}\|_{W^{\tau}_2((-\infty,\,\xi_{j}))}
   \qquad (1\le j\le J_i),
\end{equation}
provided \(J_i>1\).

\medskip\noindent
\textbf{Step 2: Pointwise exponential bound.}
Fix \(x\in(a,b)\). Without loss of generality, assume \(x\ge x_i\) (the other case is analogous using \eqref{eq:left-tail-contraction}).
Set
\[
  J_x := \max\{\, j\in\{0,\dots,M\} : \xi_j \le x \,\}\quad\Longrightarrow\quad x\in[\xi_{J_x},\xi_{J_x+1}).
\]
Applying Theorem~\ref{theo:EstSemiNormSobolev} on \(\Omega_{J_x}\) and using \eqref{eq:hUMCondition3} gives
\begin{equation}\label{eq:pointwise-1D}
     |l_{x_i}(x)|
      \le\ \|l_{x_i}\|_{L^{\infty}(\Omega_{J_x})}
      \le\ C_{\text{\tiny{SZ}}}\,h_{ \left(X_n \setminus \{x_i \} \right)\cap\Omega_{J_x},\,\Omega_{J_x}}^{\,\tau-\frac12}\,
           \|l_{x_i}\|_{W_2^{\tau}(\Omega_{J_x})}
      \le\ C_{4}\,h_{X_n,\Omega}^{\,\tau-\frac12}\,
           \|l_{x_i}\|_{W_2^{\tau}(\Omega_{J_x})},
\end{equation}
with \(C_4:=6^{\tau-\frac12}C_{\text{\tiny{SZ}}}\). Next, we define $s_x:=J_x-J_i \geq 0$ and distinguish two cases:\\
\emph{Case 1:} \(s_x\in\{0,1\}\).
Then \(x\) lies in the same or the next cell to the right of \(x_i\), and \eqref{eq:pointwise-1D} yields
\begin{equation} \label{eq:pointwise-tail2}
  |l_{x_i}(x)|\ \le\ C_4\,h_{X_n,\Omega}^{\,\tau-\frac12}\;\|l_{x_i}\|_{W_2^{\tau}(\Omega_{J_x})}\le\ C_4\,h_{X_n,\Omega}^{\,\tau-\frac12}\;\|l_{x_i}\|_{W_2^{\tau}(\R)}\le\ C_4\,h_{X_n,\Omega}^{\,\tau-\frac12}\;\mu^{\,s_x-1}\|l_{x_i}\|_{W_2^{\tau}(\R)},
\end{equation}
where the latter inequality holds, since $\mu \in (0,1)$.\\
\emph{Case 2:} \(s_x:=J_x-J_i>1\). Note that in this case $J_i<M$, since $M>M-1\geq J_x-1>J_i$.
Iterating the right-tail contraction \eqref{eq:rhCon} for \(j=J_i+1,\dots,J_x-1\) yields
\[
  \|l_{x_i}\|_{W_2^{\tau}((\xi_{J_x},\infty))}
    \le\ \mu^{} \|l_{x_i}\|_{W_2^{\tau}((\xi_{J_x-1},\infty))} \le  \dots   \le  \mu^{\,s_x-1}
       \|l_{x_i}\|_{W_2^{\tau}((\xi_{J_i+1},\infty))}.
\]
By monotonicity of norms,
\(
\|l_{x_i}\|_{W_2^{\tau}(\Omega_{J_x})}
 \le
\|l_{x_i}\|_{W_2^{\tau}((\xi_{J_x},\infty))}.
\)
Combining this with \eqref{eq:pointwise-1D} gives
\begin{equation}\label{eq:pointwise-tail1}
  |l_{x_i}(x)|
  \ \le\ C_4\,h_{X_n,\Omega}^{\,\tau-\frac12}\;
              \mu^{\,s_x-1}\;
             \|l_{x_i}\|_{W_2^{\tau}((\xi_{J_i+1},\infty))}\ \le\ C_4\,h_{X_n,\Omega}^{\,\tau-\frac12}\;
              \mu^{\,s_x-1}\;
             \|l_{x_i}\|_{W_2^{\tau}(\R)}.
\end{equation}
 So in both cases, the same upper bound holds.\\
Using the norm equivalence between \(\mathcal{H}_k(\R)\) and \(W_2^{\tau}(\R)\), the minimal-norm property of $l_{x_i}$ and  the standard bump \(\phi\) from \eqref{eq:standartBump}, gives
\[
  \|l_{x_i}\|_{W_2^{\tau}(\R)}
  \ \le\ \frac{1}{c_{\tau,\R}}\,\|l_{x_i}\|_{\mathcal{H}_k(\R)}
  \ \le\ \frac{1}{c_{\tau,\R}}\left\|\phi\!\left(\frac{\cdot-x_i}{q_{X_n}}\right)\right\|_{\mathcal{H}_k(\R)}
  \ \le\ \frac{C_{\tau,\R}}{c_{\tau,\R}}\,
         q_{X_n}^{\frac12-\tau}\,\|\phi\|_{W_2^{\tau}(\R)}.
\]
Here, for the last inequalities, we argue as in the derivation of \eqref{eq:chain11}.
Combining the latter inequality with \eqref{eq:pointwise-tail2}--\eqref{eq:pointwise-tail1} and invoking \eqref{eq:rho-qu} yields
\[
  |l_{x_i}(x)|
  \ \le\ C_5\,\mu^{s_x},\qquad
  C_5:= C_4 \mu^{-1}\,\frac{C_{\tau,\R}}{c_{\tau,\R}}\,
        \rho^{\,\tau-\frac12}\,\|\phi\|_{W_2^{\tau}(\R)}.
\]
Finally, using \(\Delta\xi \leq 1200 \tau^2 h_{X_n,\Omega}\), which results  from  \eqref{eq:hUMCondition5} via
\[
\Delta\xi = \frac{b-a}{M+1} = \frac{b-a}{\Bigl\lfloor \frac{b-a}{600 \tau^2 \,h_{X_n,\Omega}}\Bigr\rfloor} \leq  \frac{b-a}{ \frac{1}{2} \frac{b-a}{600 \tau^2 \,h_{X_n,\Omega}} } = 1200 \tau^2 h_{X_n,\Omega},
\]
it follows
\[
  s_x = J_x-J_i
  \ \ge\ \left\lfloor \frac{x-x_i}{\Delta \xi} \right\rfloor
  \ \ge\ \frac{x-x_i}{\Delta \xi}-1
  \ \ge\ \frac{x-x_i}{1200 \tau^2h_{X_n,\Omega}}-1.
\]
Writing \(\tilde\nu:=-\log\mu>0\) gives
\[
  |l_{x_i}(x)| \ \le\ C_5\,\mu^{s_x}
  \ =\ C_5\,e^{-\tilde\nu s_x}
  \ \le\ C_{\text{\tiny{ED}}}\,\exp\!\Bigl(-\,\nu\,\frac{|x-x_i|}{h_{X_n,\Omega}}\Bigr),
\]
with \(C_{\text{\tiny{ED}}}:=C_5\,e^{\tilde\nu}\) and \(\nu:=\frac{\tilde\nu}{1200 \tau^2}\).
The case \(x<x_i\) follows by the left-tail contraction, completing the proof.
\end{proof}

\noindent With the exponential decay of the Lagrange functions from Theorem~\ref{thm:exp-decay-interval},
we obtain the following uniform bound on the Lebesgue constant.

\begin{theorem}\label{thm:Lebesgue-uniform-interval}
Let the assumptions of Theorem~\ref{thm:exp-decay-interval} be satisfied on the interval $\Omega=(a,b)$.
Then there exists a constant $C_{\Lambda}>0$, depending only on the kernel
order $\tau$ (via the constants in Theorem~\ref{thm:exp-decay-interval}) and on the mesh ratio
$\rho$, such that
\[
  \Lambda_{X_n} \;\le\; C_{\Lambda}\,.
\]
\end{theorem}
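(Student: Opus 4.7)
The plan is to exploit the pointwise exponential decay of the Lagrange functions established in Theorem~\ref{thm:exp-decay-interval} together with the separation property of quasi-uniform nodes. Since
\(
\Lambda_{X_n} = \sup_{x \in \overline{\Omega}} \sum_{i=1}^{n} |l_{x_i}(x)|,
\)
it suffices to bound the pointwise sum $\sum_{i=1}^n |l_{x_i}(x)|$ uniformly in $x\in(a,b)$ and in $n$. The individual summands decay like $\exp(-\nu|x-x_i|/h_{X_n,\Omega})$, and the one-dimensional geometry controls how many nodes can sit at any prescribed distance from $x$; combining these two ingredients via a standard shell-counting argument leaves a convergent geometric series.

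Concretely, for $x\in(a,b)$ and $k\in\N_0$, I would partition the indices into shells
\[
A_k(x) \;:=\; \bigl\{\, i\in\{1,\dots,n\} \;:\; k\,h_{X_n,\Omega} \le |x-x_i| < (k+1)\,h_{X_n,\Omega} \,\bigr\}.
\]
Each shell is contained in the union of (at most) two intervals on $\R$ of length $h_{X_n,\Omega}$. By Definition~\ref{def:fillSep} the minimal pairwise distance of points in $X_n$ equals $2q_{X_n}$, so any interval of length $L$ contains at most $\lfloor L/(2q_{X_n})\rfloor+1$ nodes. Using the mesh-ratio condition $h_{X_n,\Omega}\le \rho\,q_{X_n}$, this yields
\[
|A_k(x)| \;\le\; 2\!\left(\left\lfloor \frac{\rho}{2}\right\rfloor + 1\right) \;=:\; K_\rho,
\]
a bound independent of $x$, $k$, and $n$.

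Combining this shell count with the exponential decay~\eqref{eq:exp-decay-statement}, every index $i\in A_k(x)$ satisfies $|l_{x_i}(x)|\le C_{\text{\tiny{ED}}}\,e^{-\nu k}$, and therefore
\[
\Lambda_{X_n}(x) \;=\; \sum_{k=0}^{\infty}\,\sum_{i\in A_k(x)}|l_{x_i}(x)|
\;\le\; C_{\text{\tiny{ED}}}\,K_\rho \sum_{k=0}^{\infty} e^{-\nu k}
\;=\; \frac{C_{\text{\tiny{ED}}}\,K_\rho}{1-e^{-\nu}} \;=:\; C_{\Lambda}.
\]
The right-hand side depends only on $\tau$ (through $C_{\text{\tiny{ED}}}$ and $\nu$) and on $\rho$ (through $K_\rho$), not on $n$ or on the particular choice of $X_n$. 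Taking the supremum over $x\in\overline{\Omega}$ then gives $\Lambda_{X_n}\le C_\Lambda$. No genuine obstacle remains at this stage: the substantive work has already been carried out in Theorem~\ref{thm:exp-decay-interval}, and the present argument is the customary passage from pointwise exponential decay to a uniformly bounded Lebesgue constant, made particularly clean in one dimension because every annulus reduces to two intervals.
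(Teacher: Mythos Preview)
Your proof is correct and follows essentially the same route as the paper's own argument: a shell decomposition around $x$ with width $h_{X_n,\Omega}$, a uniform bound on the number of nodes per shell via the separation distance and mesh ratio, and summation of the resulting geometric series from the exponential decay of Theorem~\ref{thm:exp-decay-interval}. The only differences are cosmetic (your shell count $K_\rho=2(\lfloor\rho/2\rfloor+1)$ versus the paper's $C_\rho=2+\rho$).
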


\begin{proof}
Fix $x\in(a,b)$ and, for $s=0,1,2,\dots$, define the (two-sided) layers of width $h_{X_n,\Omega}$,
\[
  I_s
  := \Bigl([x-(s{+}1)h_{X_n,\Omega},\,x-sh_{X_n,\Omega}]
      \cup [x+sh_{X_n,\Omega},\,x+(s{+}1)h_{X_n,\Omega}]\Bigr)\cap(a,b),
\]
and the corresponding node subsets $S_s := X_n\cap I_s$.
Since the separation distance satisfies $\min_{i\neq j}|x_i-x_j| \ge 2q_{X_n}$, any interval
of length $L$ contains at most $1+\frac{L}{2q_{X_n}}$ nodes. Hence, for each $s\ge 0$,
\[
  \#S_s \;\le\; \Bigl(1+\frac{h_{X_n,\Omega}}{2q_{X_n}}\Bigr)
               + \Bigl(1+\frac{h_{X_n,\Omega}}{2q_{X_n}}\Bigr)
          \;\le\; 2+\frac{h_{X_n,\Omega}}{q_{X_n}}
          \;\le\; 2+\rho \;=: C_\rho .
\]
By Theorem~\ref{thm:exp-decay-interval}, there exist $C_{\text{\tiny{ED}}}>0$ and $\nu>0$,
independent of $n$ and $X_n$, such that
\[
  | l_{y}(x)| \;\le\; C_{\text{\tiny{ED}}}\,\exp\!\Bigl(-\nu\,\tfrac{|x-y|}{h_{X_n,\Omega}}\Bigr)
  \qquad \text{for all } y\in X_n.
\]
For $y\in S_s$ we have $|x-y|\ge s\,h_{X_n,\Omega}$, hence $| l_y(x)|\le C_{\text{\tiny{ED}}}e^{-\nu s}$.
Therefore,
\[
  \Lambda_{X_n}(x)
  \;=\; \sum_{y\in X_n} | l_y(x)|
  \;=\; \sum_{s=0}^{\infty}\ \sum_{y\in S_s} |  l_y(x)|
  \;\le\; \sum_{s=0}^{\infty} \#S_s \,\max_{y\in S_s}| l_y(x)|
  \;\le\; C_\rho\,C_{\text{\tiny{ED}}}\sum_{s=0}^{\infty} e^{-\nu s}
  \;=\; \frac{C_\rho\,C_{\text{\tiny{ED}}}}{1-e^{-\nu}}.
\]
The right-hand side is independent of $x$, $n$, and $X_n$. Taking the supremum over $x\in\Omega$
yields the statement with
\[
  C_{\Lambda} := \frac{C_\rho\,C_{\text{\tiny{ED}}}}{1-e^{-\nu}}\,,
\]
which depends only on $\tau$ (via $C_{\text{\tiny{ED}}},\nu$) and on $\rho$.
\end{proof}
\noindent As an immediate consequence of the preceding theorem, we obtain an
“escaping the native space’’ result in the supremum norm.
\begin{theorem}\label{thm:escape_C_C_interval}
Let $\Omega=(a,b)\subset\R$ and let $k$ be a Sobolev kernel of integer order
$\tau\in\N$ that admits a local norm decomposition {(}Definition~\ref{def:SObolevWIthLocalNOrm}{)}.
Let $(X_n)_{n\in\N}$ be a nested, quasi\mbox{-}uniform sequence of finite node sets
$X_n\subset\Omega$ with dense union in $\Omega$, i.e.,
\[
  X_1\subset X_2\subset\cdots,\qquad
  \overline{\bigcup_{n\in\N} X_n}=\overline{\Omega}.
\]
Assume that for some $\rho\ge1$ and all $n\in\N$,
\[
  h_{X_n,\Omega} \;\le\;  \frac{b-a}{1200\,\tau^2},
  \qquad   q_{X_n} \;<\;  1, \qquad
  \frac{h_{X_n,\Omega}}{q_{X_n}} \;\le\; \rho.
\]
Then, for every $f\in C_{\mathrm{ex}}((a,b))$,
\[
  \lim_{n\to\infty}\bigl\|f-\Pi^n_{C,C}f\bigr\|_{C_{\mathrm{ex}}((a,b))}=0.
\]
\end{theorem}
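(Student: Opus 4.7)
The plan is to use the abstract escaping-the-native-space principle of Theorem~\ref{theo:conncentionLebeEscaping} applied to the pair $A(\Omega)=B(\Omega)=C_{\mathrm{ex}}((a,b))$, so that the task reduces to verifying its two hypotheses: $k$-admissibility of the pair and uniform boundedness of the operator norms $\|\Pi^n_{C,C}\|_{\mathcal{L}(C_{\mathrm{ex}}((a,b)))}$. The first is supplied directly by Proposition~\ref{prop:ad_CC}, since $(a,b)\subset\R$ is a bounded Lipschitz domain and $\tau\in\N$ with $\lfloor\tau\rfloor=\tau\ge 1>\tfrac{N}{2}=\tfrac12$. The nestedness and density of $(X_n)_{n\in\N}$ are assumed explicitly.

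For the second hypothesis, I would invoke Lemma~\ref{lem:Lebesgue-operator} to identify $\|\Pi^n_{C,C}\|_{\mathcal{L}(C_{\mathrm{ex}}((a,b)))}=\Lambda_{X_n}$, reducing the problem to a uniform bound on the Lebesgue constants. Here I would apply Theorem~\ref{thm:Lebesgue-uniform-interval}; it requires the hypotheses of Theorem~\ref{thm:exp-decay-interval}, which in turn require (i) that $k$ be a Sobolev kernel of integer order with local norm decomposition, (ii) the fill-distance smallness $h_{X_n,\Omega}\le(b-a)/(1200\tau^2)$, and (iii) the separation/quasi-uniformity conditions $q_{X_n}<1$ and $h_{X_n,\Omega}\le\rho\, q_{X_n}$. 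All of these are part of the assumptions of the present theorem, so there exists $C_\Lambda>0$, independent of $n$, with $\Lambda_{X_n}\le C_\Lambda$ for all $n\in\N$.

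Combining the two steps, the sequence $\bigl(\|\Pi^n_{C,C}\|_{\mathcal{L}(C_{\mathrm{ex}}((a,b)))}\bigr)_{n\in\N}$ is uniformly bounded, and both conditions of Theorem~\ref{theo:conncentionLebeEscaping} are satisfied. Its conclusion then yields
\[
  \lim_{n\to\infty}\bigl\|f-\Pi^n_{C,C}f\bigr\|_{C_{\mathrm{ex}}((a,b))}=0
  \qquad \forall\,f\in C_{\mathrm{ex}}((a,b)),
\]
which is the claim. There is essentially no obstacle here: the proof is a clean assembly of the already-established results, with the only care needed being a verification that the standing sampling hypotheses of Theorem~\ref{thm:exp-decay-interval} (in particular the form \eqref{eq:hUMCondition}) match those stated in the present theorem. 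The genuine difficulty of the section sits upstream, in proving the exponential decay of the Lagrange functions (Theorem~\ref{thm:exp-decay-interval}) and converting it into the uniform Lebesgue bound; given those, the final escape statement follows without further work.
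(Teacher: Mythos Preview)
Your proposal is correct and mirrors the paper's own proof essentially line for line: invoke Proposition~\ref{prop:ad_CC} for $k$-admissibility, Lemma~\ref{lem:Lebesgue-operator} to identify $\|\Pi^n_{C,C}\|=\Lambda_{X_n}$, Theorem~\ref{thm:Lebesgue-uniform-interval} for the uniform Lebesgue bound, and then conclude via Theorem~\ref{theo:conncentionLebeEscaping}. Your additional remarks verifying $\lfloor\tau\rfloor>\tfrac12$ and matching the sampling hypotheses to \eqref{eq:hUMCondition} are correct and slightly more explicit than the paper's terse treatment.
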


\begin{proof}
By Lemma~\ref{lem:Lebesgue-operator},
\(
  \|\Pi^n_{C,C}\|_{\mathcal{L}\left(C_{\mathrm{ex}}(\Omega)\right)}
  = \Lambda_{X_n}.
\)
Theorem~\ref{thm:Lebesgue-uniform-interval} yields a bound
$\sup_n \Lambda_{X_n}<\infty$ under the stated sampling assumptions.
Moreover, by Proposition~\ref{prop:ad_CC}, the pair
$\bigl(C_{\mathrm{ex}}(\Omega),C_{\mathrm{ex}}(\Omega)\bigr)$ is $k$-admissible.
Hence all hypotheses of Theorem~\ref{theo:conncentionLebeEscaping} are satisfied,
and the claim follows.
\end{proof}
\noindent Note that, in the one-dimensional case, the Mat\'ern kernels \eqref{eq:matern:12}--\eqref{eq:matern:52} have integer order $\tau$. Thus, as discussed at the end of Section~\ref{subsec:ti-algebraic}, these kernels admit the  local norm decomposition from Definition~\ref{def:SObolevWIthLocalNOrm}. Consequently, by the preceding theorem, kernel interpolation with these kernels yields approximants that converge to every $f \in C([a,b])$ in the supremum norm on $[a,b]$. In particular, unbounded oscillations of the interpolants under quasi-uniform refinement of the interpolation nodes are precluded.

\section{Conclusion and outlook}\label{sec6}
We have introduced a general criterion for {native-space escape by interpolation}: given Banach spaces $A(\Omega)$ (target class) and $B(\Omega)$ (error norm), the behavior of the extended interpolation operators
\[
  \Pi^{\,n}_{A,B}:A(\Omega)\to B(\Omega)
\]
governs whether interpolation converges in $\|\cdot\|_{B(\Omega)}$ for all $f\in A(\Omega)$. In particular, Theorem~\ref{theo:conncentionLebeEscaping} shows that {uniform boundedness} of $\{\Pi^{\,n}_{A,B}\}_n$ is necessary and sufficient for convergence on $A(\Omega)$. We applied this framework to Sobolev kernels with $\lfloor\tau\rfloor>N/2$ and continuous target functions on $\overline{\Omega}$, obtaining (i) $L^2(\Omega)$-convergence under quasi-uniform sampling, and (ii) $C_{\mathrm{ex}}(\Omega)$-convergence on intervals, the latter via uniform bounds on the Lebesgue constants $\Lambda_{X_n}$.

\medskip
\noindent
A natural direction is to investigate {infinitely smooth} kernels, such as the Gaussian. Our numerical experiments (see Fig.~\ref{fig:lebesgue-vs-n}), together with the evidence reported in \cite{marchi2010}, suggest the following conjecture: even for quasi-uniform node sets $X_n$ whose fill distance tends to zero, the Lebesgue constants $\Lambda_{X_n}$ for Gaussian interpolation are {not} uniformly bounded. If true, Theorem~\ref{theo:conncentionLebeEscaping} implies the existence of a continuous function $f\in C_{\mathrm{ex}}(\Omega)$ that is {not} approximated in $C_{\mathrm{ex}}(\Omega)$ by Gaussian interpolation along a nested, quasi-uniform sequence $X_n$ with dense union. Two concrete problems arise:
(i) Construct an explicit $f\in C_{\mathrm{ex}}(\Omega)$ witnessing non-convergence along some nested quasi-uniform sequence $X_n$.
(ii) Prove a quantitative {lower bound} for $\Lambda_{X_n}$ that grows with $n$ for the Gaussian (and, more broadly, for infinitely smooth translation-invariant kernels).

\begin{figure}[htbp]  
    \centering         
    \includegraphics[width=0.9\textwidth]{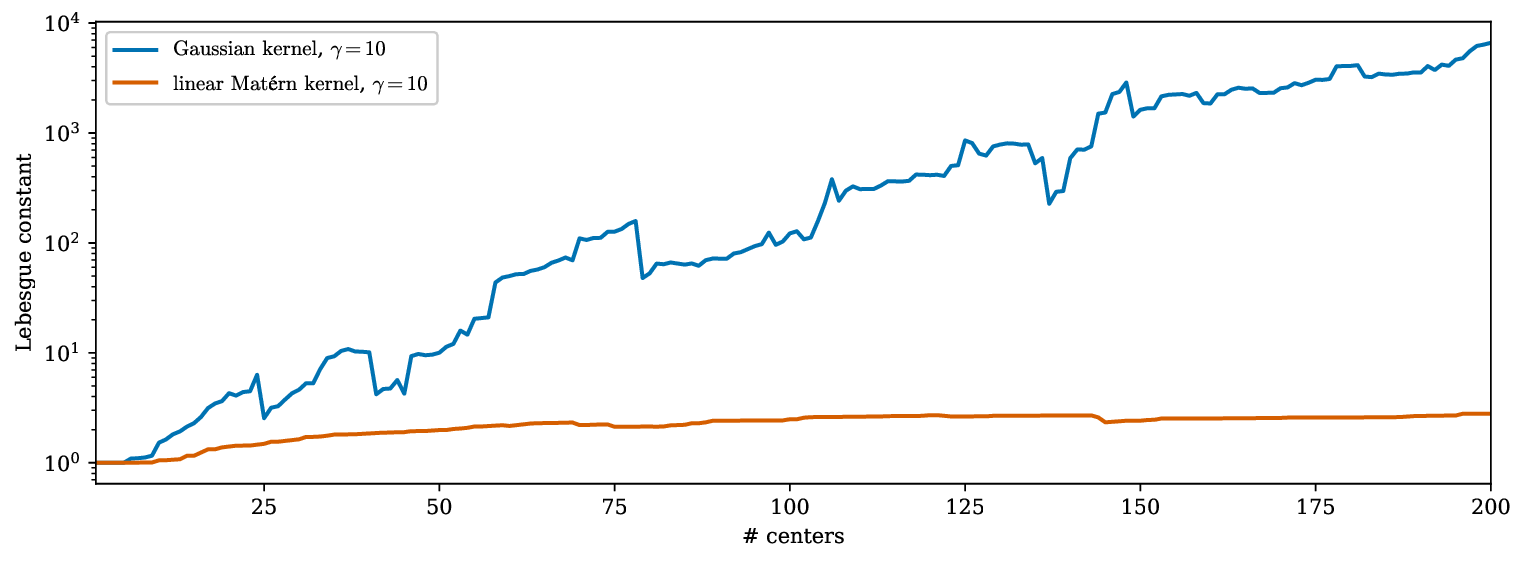}  
\caption{Lebesgue constant $\Lambda_{X_n}$ as a function of the number $n$ of
    quasi-uniformly distributed centers on the unit square for the Gaussian kernel with shape
    parameter $\gamma = 10$ and the linear Mat\'ern kernel~\eqref{eq:matern:32} with the
    same shape parameter. To generate a sequence of quasi-uniform centers, the geometric greedy
    algorithm was employed; see~\cite{DeMarchi2005}.}
  \label{fig:lebesgue-vs-n}
\end{figure}

\medskip
\noindent
For Sobolev kernels on intervals, we established $\sup_n\Lambda_{X_n}<\infty$. The key device is a smooth cutoff whose transition layers $(\xi_j,\xi_{j+1})$ lie strictly inside the domain and contain interpolation nodes, which permits invoking the scattered-zeros inequality (Theorem~\ref{theo:EstSemiNormSobolev}) on each subinterval. Extending this argument beyond one dimension is delicate: for example, for hypercubes, the transition regions of the smooth cutoff function can intersect the boundary, obstructing the use of Theorem \ref{theo:EstSemiNormSobolev}. Nevertheless, both our experience (see Fig.~\ref{fig:lebesgue-vs-n}) and the computations in \cite{marchi2010} indicate that on the unit square the Lebesgue constants for Sobolev kernels may still be uniformly bounded. This points to several avenues for future work, for extending our result to higher dimensions.

\section*{Acknowledgements}
  Funded by Deutsche Forschungsgemeinschaft (DFG, German Research Foundation) under Project No. 540080351 and Germany’s Excellence Strategy -- EXC 2075 -- 390740016. We acknowledge support from the Stuttgart Center for Simulation Science (SimTech).

   \end{document}